\numberwithin{equation}{section} 
\theoremstyle{plain}
\newtheorem{theorem}{Theorem}[section]
\newtheorem{proposition}[theorem]{Proposition}
\newtheorem{lemma}[theorem]{Lemma}
\newtheorem{corollary}[theorem]{Corollary}
\theoremstyle{definition}
\newtheorem{definition}[theorem]{Definition}
\newtheorem{nota}[theorem]{Notation}
\theoremstyle{remark}
\newtheorem{remark}[theorem]{Remark}
\newcommand{\C}{\mathbb{C}}
\newcommand{\Q}{\mathbb{Q}}
\newcommand{\Z}{\mathbb{Z}}
\DeclareMathOperator{\Spec}{Spec}
\DeclareMathOperator{\SL}{SL}
\DeclareMathOperator{\GL}{GL}
\DeclareMathOperator{\Gal}{Gal}
\renewcommand{\to}{\longrightarrow}
\newcommand{\HH}{\mathrm{H}}
\begin{document}

\title{Triple Massey products for higher genus curves}

\author[F. Bleher]{Frauke M. Bleher}
\address{F.B.: Department of Mathematics\\University of Iowa\\
14 MacLean Hall\\Iowa City, IA 52242-1419\\ U.S.A.}
\email{frauke-bleher@uiowa.edu}
\thanks{The first author was supported in part by Simons Foundation grant No. 960170.}

\author[T. Chinburg]{Ted Chinburg}
\address{T.C.: Department of Mathematics\\University of Pennsylvania\\
Philadelphia, PA 19104-6395\\ U.S.A.}
\email{ted@math.upenn.edu}
\thanks{The second author was supported in part by Simons Foundation grant No. MP-TSM-00002279.}

\author[J. Gillibert]{Jean Gillibert}
\address{J. G.:  Institut de Math{\'e}matiques de Toulouse \\ CNRS UMR 5219 \\
118, route de Narbonne, 31062 Toulouse Cedex \\ France.}
\email{Jean.Gillibert@math.univ-toulouse.fr}
\thanks{The third author was supported in part by the CIMI Labex.}

\date{\today}

\subjclass[2010]{14F20 (Primary) 55S30, 57K20 (Secondary)}
\keywords{Massey products, \'etale cohomology, smooth projective curves, hyperelliptic curves, pure braid groups, mapping class groups}

\begin{abstract}
We study the vanishing of triple Massey products for absolutely irreducible smooth projective curves  over a number field. For each genus $g > 1$  and each prime $\ell > 3$, we construct examples of hyperelliptic curves of genus $g$ for which there are non-empty triple Massey products with  coefficients in $\mathbb{Z}/\ell$  that do not contain $0$.
\end{abstract}

\maketitle


\section{Introduction}
\label{s:intro}

Triple Massey products arise in topology from studying the higher linking behavior of closed curves in the three-sphere \cite{Massey1,Massey2}.  For example, the fact that the Borromean rings are linked is a consequence of the  non-vanishing of a triple Massey product.  The point of view we adopt in this paper is that the triple Massey product measures the complexity of the quotient of a group by the fourth term in its lower central series.  The precise definition of triple Massey products will be recalled in \S \ref{s:prelim}.  Suppose $\Gamma$ is a group and $\ell$ is a prime.  Let $U_4(\mathbb{Z}/\ell)$ be  the group of upper triangular unipotent  $4 \times 4$ matrices over $\mathbb{Z}/\ell$, and let $Z(U_4(\mathbb{Z}/\ell))$ be its center.  As explained in \S \ref{ss:tripleMasseygeneral}, there is a non-empty  triple Massey product for $\Gamma$ with coefficients in  $\mathbb{Z}/\ell $ that does not contain $0$ exactly when there exists a homomorphism 
$\tilde{\rho}:\Gamma \to U_4(\mathbb{Z}/\ell)/Z(U_4(\mathbb{Z}/\ell))$ with the following property.  Suppose $\tilde{\rho}' :\Gamma \to U_4(\mathbb{Z}/\ell)/Z(U_4(\mathbb{Z}/\ell))$ is any homomorphism such that $\tilde{\rho}$ and $\tilde{\rho}'$ produce the same homomorphism from $\Gamma$ to the quotient of $U_4(\mathbb{Z}/\ell)$ by its commutator subgroup.  Then $\tilde{\rho}'$ cannot be lifted to a homomorphism
$\rho':\Gamma \to U_4(\mathbb{Z}/\ell)$.   Our main result is as follows:

\begin{theorem}
\label{thm:TedsShamefulVersion}
Let $\ell>3$ be a prime number, and let $g>1$ be an integer. Then there exist infinitely many non-isomorphic pairs $(F,X)$ consisting of a number field $F$ and a smooth genus $g$ hyperelliptic curve $X$ over $F$ having the following two properties:
\begin{itemize}
\item[(i)] The $\ell$-torsion of the Jacobian of $X$ over an algebraic closure of $F$  is already defined over $F$.
\item[(ii)] There is a non-empty triple Massey product for the \'etale fundamental group $\Gamma$ of $X$ with coefficients in $\mathbb{Z}/\ell$ that does not contain $0$.
\end{itemize}
\end{theorem}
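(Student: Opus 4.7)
By the cohomological reformulation recalled in the introduction, property~(ii) is equivalent to exhibiting a homomorphism $\tilde\rho\colon \Gamma \to U_4(\mathbb{Z}/\ell)/Z(U_4(\mathbb{Z}/\ell))$ such that for every $\tilde\rho'$ agreeing with $\tilde\rho$ in $U_4/[U_4,U_4]=(\mathbb{Z}/\ell)^3$, the obstruction in $H^2(\Gamma,\mathbb{Z}/\ell)$ to lifting through $U_4(\mathbb{Z}/\ell)\twoheadrightarrow U_4(\mathbb{Z}/\ell)/Z$ is non-zero. If $\tilde\rho$ is specified by characters $\chi_1,\chi_2,\chi_3\colon \Gamma \to \mathbb{Z}/\ell$ with $\chi_i\cup\chi_{i+1}=0$, then as $\tilde\rho'$ ranges over all partial lifts the obstructions sweep out the Massey product coset in $H^2(\Gamma,\mathbb{Z}/\ell)$ modulo the indeterminacy subgroup $\chi_1\cup H^1(\Gamma,\mathbb{Z}/\ell) + H^1(\Gamma,\mathbb{Z}/\ell)\cup\chi_3$. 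My task is to arrange that this coset avoids $0$.

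I first fix a smooth hyperelliptic curve $X/\overline{\Q}$ of genus $g$ and choose a number field $F$ large enough so that $X$, its $2g+2$ Weierstrass points, and all $\ell$-torsion of $\mathrm{Jac}(X)$ descend to $F$. This realizes~(i) and forces $G_F$ to act trivially on $H^1_{\et}(X_{\overline F},\mathbb{Z}/\ell)$. Consequently every mod-$\ell$ character of $\pi_1^{\et}(X_{\overline F})$ extends canonically to a character of $\Gamma$ trivial on a chosen splitting of $\Gamma\to G_F$. I then pick $\chi_1,\chi_2,\chi_3$ lying in an isotropic subspace for the symplectic cup-product form on $H^1(X_{\overline F},\mathbb{Z}/\ell)$, so that the cup products $\chi_i\cup\chi_{i+1}$ vanish on the geometric part; the corresponding extension classes of $U_4/Z\to(\mathbb{Z}/\ell)^3$ then pull back trivially and yield a partial lift $\tilde\rho\colon \Gamma\to U_4/Z$.

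The decisive step is non-vanishing of the obstruction modulo the full indeterminacy. This is hopeless on the geometric side: the cup product on $H^1(X_{\overline F},\mathbb{Z}/\ell)$ is non-degenerate into the one-dimensional $H^2(X_{\overline F},\mathbb{Z}/\ell)\cong\mathbb{Z}/\ell$, so $\chi_1\cup H^1_{\mathrm{geom}}$ alone already exhausts the geometric $H^2$ and every purely geometric Massey product contains $0$. The entire content of the theorem must therefore originate in the arithmetic part of $H^2(\Gamma,\mathbb{Z}/\ell)$, where the Hochschild--Serre spectral sequence contributes $H^1(G_F, H^1(X_{\overline F},\mathbb{Z}/\ell))$ and $H^2(G_F,\mathbb{Z}/\ell)\subset \Br(F)[\ell]$. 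The plan is to use the monodromy action of the hyperelliptic mapping class group, realized via the pure braid group of $\PP^1$ on the configuration space of $2g+2$ branch points, to produce a Galois-theoretic Massey-product class (for instance an obstruction located at an auxiliary place $v$ of $F$ of prescribed reduction behavior for $X$) lying outside the indeterminacy subgroup. Infinitely many non-isomorphic pairs $(F,X)$ are then obtained by varying the branch locus in a one-parameter family, preserving both~(i) (after further base change) and the non-triviality of the arithmetic obstruction by a continuity argument.

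The main obstacle is this last construction: pinning down the arithmetic Massey-product class explicitly and verifying that it remains non-zero modulo the \emph{entire} indeterminacy coset, not merely for the single $\tilde\rho$ we write down. This is where the fine structure of the hyperelliptic mapping class group and its action on lower-central-series quotients of the surface group must be used, and where the hypothesis $\ell>3$ is likely exploited through the non-triviality of these actions on the appropriate $\ell$-adic graded pieces.
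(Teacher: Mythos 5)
Your high-level diagnosis is correct and matches the paper's strategy: the geometric triple Massey product always contains $0$ (by non-degeneracy of cup product into $\HH^2(\bar X,\mathbb{Z}/\ell)\cong\mathbb{Z}/\ell$), so the obstruction must be arithmetic, and it is to be produced from the action of the hyperelliptic monodromy (normalized pure braid group) on lower-central-series quotients of the surface group, with $\ell>3$ entering through that action. But the proposal stops exactly where the proof begins: you explicitly defer ``pinning down the arithmetic Massey-product class and verifying that it remains non-zero modulo the entire indeterminacy'' as ``the main obstacle.'' That step is the entire content of the theorem, so as written this is an outline with a gap, not a proof.

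Concretely, three things are missing or wrong. First, the order of quantifiers: you fix an arbitrary $X/\overline{\Q}$ and then enlarge $F$; but enlarging $F$ only shrinks the image of $G_F$ in $\Aut\bigl(\pi_1(\bar X)/\pi_1(\bar X)_{4,\ell}\bigr)$, whereas the argument needs that image to \emph{contain} a specific element $\Phi$ acting trivially on $\pi_1(\bar X)/\pi_1(\bar X)_{3,\ell}$ but non-trivially on $\pi_1(\bar X)_{3,\ell}/\pi_1(\bar X)_{4,\ell}$ in a prescribed way (a non-zero value of the degree-$3$ Johnson homomorphism, computed via Morita's work, paired against a loop $\omega_0$ killed by all three characters). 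For a random $X$ this image can be too small. The paper obtains $\Phi$ by taking $X=\mathcal{C}_t$ for $t$ a Hilbert-irreducible specialization of the universal hyperelliptic family, so that the arithmetic monodromy on the mod-$\Omega_{4,\ell}$ quotient equals the full finite quotient of the braid group. Second, your ``continuity argument'' for producing infinitely many pairs has no arithmetic meaning; the correct tool is Hilbert irreducibility applied to the finite \'etale cover of the rational base $\mathcal{H}_{g}^{(2)}$ corresponding to $G(4,\ell)$. Third, rationality of the $\ell$-torsion (triviality on $\Omega/\Omega_{2,\ell}$) does not by itself guarantee that the cup products $\chi_1\cup\chi_2$ and $\chi_2\cup\chi_3$ vanish in $\HH^2(\Gamma,\mathbb{Z}/\ell)$, i.e.\ that the Massey product over $\Gamma$ is non-empty; the paper base-changes further to the fixed field of $G(3,\ell)$ (trivial action on $\Omega/\Omega_{3,\ell}$) to extend the defining system. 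Finally, the way the paper handles ``modulo the entire indeterminacy'' is worth noting: it proves a clean group-theoretic criterion showing that \emph{no} homomorphism $\rho:\Gamma\to U_4(\mathbb{Z}/\ell)$ restricting to the given characters can exist, because $\rho(\Phi)$ would be forced into the abelian commutator subgroup of $U_4$ yet would have to fail to commute with $\rho(\omega_0)$; your suggestion of localizing the obstruction at an auxiliary place $v$ is not how the argument goes and would face the same indeterminacy problem.
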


We in fact prove a stronger result in Theorem \ref{thm:JeanVersion}.  An easy Chebotarev argument shows that Theorem \ref{thm:TedsShamefulVersion} also holds with $F$ being a finite field in which $\ell$ is invertible rather than a number field (see Remark \ref{rem:Chebotarev}).

Suppose $X$ is a smooth projective curve over a field $F$ in which $\ell $ is invertible and that $X$ has positive genus.  Let $\pi_1(X,\eta)$ be the \'etale fundamental group of $X$ with respect to a geometric base point $\eta$.
By a result of Achinger (see Proposition \ref{prop:Achinger}), there is a natural isomorphism for all $i \ge 0$ between the \'etale cohomology group $\HH^i(X,\mathbb{Z}/\ell)$ and the profinite group cohomology $\HH^i(\pi_1(X,\eta),\mathbb{Z}/\ell)$.
Thus one could rephrase our results in terms of triple Massey products associated to triples of elements of
$\HH^1(X,\mathbb{Z}/\ell)$.

We now describe some prior work on triple Massey products  for $\HH^1(X,\mathbb{Z}/\ell)$ when $\ell > 2$ and $X$ is an absolutely irreducible smooth projective variety over a  field $F$ in which $\ell$ is invertible.  When $d = \mathrm{dim}(X) = 0$, Min{\'a}\v{c} and T{\^a}n showed in \cite{MinacTan2016} that for arbitrary $F$, all non-empty triple Massey products contain $0$, following earlier work by
 Hopkins and Wickelgren \cite{HopkinsWickelgren}, Matzri \cite{Matzri}, Efrat and Matzri \cite{EfratMatzri}, and others.  When $d = 0$ and $F$ is a number field, Harpaz and Wittenberg showed in \cite{HarWit2019} that all non-empty $t$-fold Massey products contain $0$ when  $t \ge 3$.  For a more detailed account of the case $d  = 0$ see the introduction of \cite{HarWit2019}. 
 
 Ekedahl gave an example in \cite{Ekedahl} showing that the triple Massey product of elements of 
$\HH^1(X,\mathbb{Z}/\ell)$ may be non-empty without containing $0$ when $d = 2$ and $F = \mathbb{C}$.

 In \cite{BleherChinburgGillibert2023} we considered the case in which $d = 1$ and $X$ is an elliptic curve over a field $F$ in which $\ell$ is invertible.  When the $\ell$-torsion of $X \cong \mathrm{Jac}(X)$ over an algebraic closure of $F$ is already defined over $F$ and $\ell > 3$, we proved that non-empty triple Massey products always contain $0$.  When $\ell = 3$ we showed they need not contain $0$ and we analyzed exactly when this occurs. One interest of Theorem \ref{thm:TedsShamefulVersion} is that for curves of genus larger than $1$, triple Massey products can be non-empty and not contain $0$ for arbitrary primes $\ell > 3$ even when the $\ell$-torsion of the Jacobian of the curve is defined over the ground field. 
 
 We now outline the contents of this paper.  
 
 In \S \ref{s:prelim} we recall some basic definitions and set up  notations.  The main result of \S \ref{s:prelim} is Proposition \ref{prop:sufficient_general}.  This gives group theoretic sufficient conditions for the existence of a non-empty triple Massey product that does not contain $0$ associated to a group $\Gamma$ with coefficients in $\mathbb{Z}/\ell$ when $\ell > 3$.  
 Our technique for proving Theorem \ref{thm:TedsShamefulVersion} and the stronger result in Theorem \ref{thm:JeanVersion} is to produce examples for which the above sufficient conditions are met.  To do this, we recall in \S \ref{s:topology} some results of Mumford,  Farb and Morita concerning normalized pure braid groups and mapping class groups.  The normalized pure braid group that arises is the fundamental group of  an affine rational parameter space $\mathcal{H}_g^{(2)}$ for a family 
$\mathcal{C}$ of hyperelliptic curves of genus $g$.   Our goal is to show there exists a fiber $X$ in the family $\mathcal{C}$, defined over a number field $F$,  for which the criteria of Proposition \ref{prop:sufficient_general} are met, thus giving a non-empty triple Massey product that does not contain $0$.

 To accomplish this, we derive in \S \ref{ss:johnson} some consequences of work of  Morita concerning Johnson's homomorphisms.  These homomorphisms describe the actions of particular subgroups of the mapping class group of a Riemann surface on quotients of the fundamental group of the surface by terms in  its lower central series. We use work of Mumford, Farb and Morita to exhibit particular elements of the normalized pure braid group $\pi_1^{\mathrm{top}}(\mathcal{H}_g^{(2)})$ with prescribed action on a certain finite characteristic quotient of the fundamental group of a fiber of the family $\mathcal{C} \to \mathcal{H}_g^{(2)}$ of hyperelliptic curves.  These elements lie in particular cosets of  a finite index normal subgroup of $\pi_1^{\mathrm{top}}(\mathcal{H}_g^{(2)})$. 
  
 The finite quotient of $\pi_1^{\mathrm{top}}(\mathcal{H}_g^{(2)})$ by this normal subgroup leads to a finite Galois topological covering $B(4,\ell) \to \mathcal{H}_g^{(2)}$.  In \S \ref{ss:alg}, wee use Grothendieck's existence theorem to realize this covering as an irreducible cover of algebraic varieties over a number field $K$.  Since $\mathcal{H}_g^{(2)}$ is a rational variety, we can then use Hilbert's irreducibility theorem in \S \ref{ss:spe} to produce infinitely many $K$-rational points $t$ of $\mathcal{H}_g^{(2)}$  over which  this cover remains irreducible.  We specialize to these points $t$ the above results about the action of particular elements in $\pi_1^{\mathrm{top}}(\mathcal{H}_g^{(2)})$ on the fundamental groups of the fiber of $\mathcal{C} \to \mathcal{H}_g^{(2)}$ over $t$.  
This leads to  examples for which the criteria of Proposition \ref{prop:sufficient_general}  are met, leading to a proof of Theorem \ref{thm:TedsShamefulVersion}. 


\section{Preliminaries}
\label{s:prelim}

In this section, we recall some definitions and results that will be needed for the remainder of the paper. Throughout the paper, we assume that $\ell > 3$ is a fixed rational prime.

\subsection{Lower central series and unitriangular groups}
\label{ss:lowercentral}

We first introduce some notation and definitions.

\begin{nota}
\label{not:groupgeneration}
Let $\Omega$ be an abstract (resp. profinite) group. Let $x,y\in \Omega$, and let $A,B\subseteq \Omega$. 
\begin{enumerate}
\item[(a)] We define the commutator of $x$ and $y$ to be $[x,y]=xyx^{-1}y^{-1}$.
\item[(b)] We denote by $\langle A\rangle$ the subgroup of $\Omega$ that is generated (resp. profinitely generated) by $A$, i.e. $\langle A\rangle$ is the smallest subgroup (resp. the smallest \emph{closed} subgroup) of $\Omega$ containing $A$.
\item[(c)] Suppose $V$ is a set (resp. a topological space), and $\tilde{\Omega}$ is an abstract (resp. profinite) group. Suppose $R$ is a commutative ring on which $\Omega$ acts trivially, where we give $R$ the discrete topology when $\Omega$ is a profinite group.
\begin{itemize}
\item[(c1)] We denote by $\mathrm{Map}(\Omega,V)$ the set of all maps (resp. continuous maps) from $\Omega$ to $V$.
\item[(c2)] We denote by $\mathrm{Hom}(\Omega,\tilde{\Omega})$ the set of all group homomorphisms (resp. continuous group homorphisms) from $\Omega$ to $\tilde{\Omega}$. Similarly, we denote by $\mathrm{Aut}(\Omega)$ the group of all group automorphisms (resp. continuous group automorphisms) of $\Omega$.
\item[(c3)] We denote by $\HH^i(\Omega,R)$ (for $i\ge 0$) the $i$-th group cohomology (resp. the $i$-th continuous group cohomology) of $\Omega$ with coefficients in $R$ (for $i\ge 0$). Since we assume $\Omega$ acts trivially on $R$, we have
$$\HH^1(\Omega,R)=\mathrm{Hom}(\Omega,R).$$
\end{itemize}
\item[(d)]
We also use the following notation:
\begin{eqnarray*}
[A,B] &=& \langle [a,b]\,:\, a\in A\mbox{ and } b\in B\rangle,\mbox{ and }\\
\Omega^\ell &=& \langle g^\ell \,:\, g\in \Omega\rangle.
\end{eqnarray*}
\end{enumerate}
\end{nota}

\begin{definition}
\label{def:lowercentral}
Let $\Omega$ and $G$ be both abstract (resp. profinite) groups, and assume Notation \ref{not:groupgeneration}.
\begin{enumerate}
\item[(a)] The \emph{lower central series} of $\Omega$ is given by 
$$\Omega=\Omega_1\ge \Omega_2\ge \Omega_3\ge \ldots$$
where $\Omega_{i+1} = [\Omega_i,\Omega]$ for $i\ge 1$.

\item[(b)] The \emph{lower central $\ell$-series} of $\Omega$ is given by 
$$\Omega=\Omega_{1,\ell}\ge \Omega_{2,\ell}\ge \Omega_{3,\ell}\ge \ldots$$
where $\Omega_{i+1,\ell}=\langle [\Omega_{i,\ell},\Omega]\cup (\Omega_{i,\ell})^\ell \rangle$ for $i\ge 1$.

\item[(c)] Suppose we have an action of $G$ on $\Omega$ given by a morphism
$$\varphi\in\mathrm{Hom}(G, \mathrm{Aut}(\Omega))$$
We write the elements of the semidirect product $\Omega \rtimes_\varphi G$ as pairs $(\omega,g)$ with $\omega\in \Omega$ and $g\in G$, and we identify $\Omega$ with the (closed) normal subgroup $\Omega\times 1$ and $G$ with the (closed) subgroup $1\times G$. Fix $i\ge 1$. Then $\varphi$ induces (continuous) group homomorphisms
$$\varphi_i:G\to\mathrm{Aut}(\Omega/\Omega_i) \quad\mbox{and}\quad \varphi_{i,\ell}:G\to\mathrm{Aut}(\Omega/\Omega_{i,\ell}).$$
We define
$$G(i) = \mathrm{kernel}\left(\varphi_i\right)\quad\mbox{and}\quad G(i,\ell) = \mathrm{kernel}\left(\varphi_{i,\ell}\right).$$
\end{enumerate}
\end{definition}

\begin{remark}
\label{rem:lowercentral}
Let $\Omega$ be an abstract (resp. profinite) group.
\begin{enumerate}
\item[(a)]
Since
$$[g_1g_2,h] = (g_1(g_2hg_2^{-1}h^{-1})g_1^{-1})(g_1hg_1^{-1}h^{-1}) = [g_1g_2g_1^{-1}, g_1hg_1^{-1}]\cdot [g_1,h]$$
for all $g_1,g_2,h\in \Omega$, it follows, using induction, that, for all $i\ge 2$,
$$\Omega_i=\langle [\cdots[[x_1,x_2],x_3],\cdots,x_i]\,:\, x_1,x_2,x_3\ldots,x_i\in \Omega\rangle.$$
In other words, $\Omega_i$ is generated by all $i$-fold commutators of elements of $\Omega$. In particular, this shows that for all $i\ge 1$,
$$(\Omega_2)_i\le \Omega_{i+1}.$$
Similarly, we have for all $i\ge 1$,
$$(\Omega_{2,\ell})_{i,\ell}\le \Omega_{i+1,\ell}.$$

\item[(b)] Suppose $\Omega$, $G$ and $\varphi$ are as in Definition \ref{def:lowercentral}(c) and that $\Omega$ is \emph{finitely generated} as abstract (resp. profinite) group. Using induction, it follows that $\Omega/\Omega_{i,\ell}$ is a finite group for all $i\ge 1$. 
In particular, $G(i,\ell)$ is a (closed) normal subgroup of $G$ of finite index for all $i\ge 1$.
\end{enumerate}
\end{remark}

\begin{remark}
\label{rem:Johnsonhomgeneral}
Let $\Omega$ and $G$ be both abstract (resp. profinite) groups, and assume Notation $\ref{not:groupgeneration}$ and Definition $\ref{def:lowercentral}$. Suppose $\varphi\in \mathrm{Hom}(G,\mathrm{Aut}(\Omega))$ is a group action and define for all $\Phi\in G$ and all $\omega\in\Omega$
$$\Phi.\omega:=\varphi(\Phi)(\omega).$$
Fix $i\ge 2$.
\begin{itemize}
\item[(a)] If $\Phi\in G(i,\ell)$ then the element $(\Phi.\omega)\,\omega^{-1}$ lies in $\Omega_{i,\ell}$ for all $\omega\in \Omega$. Moreover, for all $\omega_1,\omega_2\in\Omega$, we have
\begin{eqnarray*}
\left(\Phi.(\omega_1\omega_2)\right)\,(\omega_1\omega_2)^{-1}
&=&(\Phi.\omega_1)\,(\Phi.\omega_2)\,(\omega_2^{-1}\omega_1^{-1}) \\
&=& (\Phi.\omega_1)\,\omega_1^{-1} \, (\Phi.\omega_2)\,\omega_2^{-1}\, \left(\omega_2\,(\Phi.\omega_2)^{-1}\,\omega_1 (\Phi.\omega_2)\,\omega_2^{-1}\omega_1^{-1}\right)\\
&=&(\Phi.\omega_1)\,\omega_1^{-1} \, (\Phi.\omega_2)\,\omega_2^{-1}\, 
\left[\omega_2\,(\Phi.\omega_2)^{-1},\omega_1 \right] \\
&\equiv&((\Phi.\omega_1)\,\omega_1^{-1})\, ((\Phi.\omega_2)\,\omega_2^{-1})\mod \Omega_{i+1,\ell}
\end{eqnarray*}
where the last equivalence follows since $[\Omega_{i,\ell},\Omega]\le \Omega_{i+1,\ell}$.
Therefore, we obtain a well-defined (continuous) group homomorphism
$$\tau_{i,\ell}:G(i,\ell)\to \mathrm{Hom}(\Omega,\Omega_{i,\ell}/\Omega_{i+1,\ell})$$
by defining
$$\tau_{i,\ell}(\Phi)(\omega) = (\Phi.\omega)\,\omega^{-1}\mod \Omega_{i+1,\ell}$$
for all $\Phi\in G(i,\ell)$ and all $\omega\in \Omega$. 
Note that since $\Omega_{i,\ell}/\Omega_{i+1,\ell}$ is abelian of exponent $\ell$ and $\Omega/\Omega_{2,\ell}$ is the maximal abelian exponent $\ell$ quotient of $\Omega$, we can identify
$$\mathrm{Hom}(\Omega,\Omega_{i,\ell}/\Omega_{i+1,\ell})=
\mathrm{Hom}(\Omega/\Omega_{2,\ell},\Omega_{i,\ell}/\Omega_{i+1,\ell}).$$
Moreover, $\tau_{i,\ell}$ factors through the quotient group $G(i,\ell)/G(i+1,\ell)$.
\item[(b)]
Using $[\Omega_i,\Omega] = \Omega_{i+1}$, a similar argumentation to part (a) shows that we also have a well-defined (continuous) group homomorphism
$$\tau_i:G(i)\to\mathrm{Hom}(\Omega/\Omega_2,\Omega_i/\Omega_{i+1})$$
given by
$$\tau_i(\Phi)(\omega) = (\Phi.\omega)\,\omega^{-1}\mod \Omega_{i+1}$$
for all $\Phi\in G(i)$ and all $\omega\in \Omega$. Similarly to part (a), $\tau_i$ factors through the quotient group $G(i)/G(i+1)$.
\end{itemize}
\end{remark}

We next recall some well-known facts about unitriangular matrices with coefficients in $\mathbb{Z}/\ell$. 

\begin{remark}
\label{rem:U4}
Let $U_4(\mathbb{Z}/\ell)$ be the group of upper triangular unipotent (a.k.a. unitriangular) $4\times 4$ matrices with entries in $\mathbb{Z}/\ell$. Since we assume $\ell>3$, $U_4(\mathbb{Z}/\ell)$ has exponent $\ell$. Let
\begin{equation}
\label{eq:matrixnotation}
M=M(a_1,a_2,a_3,u,v,w):= \begin{pmatrix} 
1&a_1&u&v\\
0&1&a_2&w\\
0&0&1&a_3\\ 
0&0&0&1
\end{pmatrix}
\end{equation}
in $U_4(\mathbb{Z}/\ell)$ for $a_1,a_2,a_3,u,v,w\in\mathbb{Z}/\ell$, and let $\tilde{M}=M(\tilde{a}_1,\tilde{a}_2,\tilde{a}_3,\tilde{u},\tilde{v},\tilde{w})$. Then the commutator $[M,\tilde{M}]=M\tilde{M}M^{-1}\tilde{M}^{-1}$ equals
\begin{equation}
\label{eq:commutator}
[M,\tilde{M}] = 
\begin{pmatrix}
1&0&a_1\tilde{a}_2-a_2\tilde{a}_1&
(a_1\tilde{w}-w\tilde{a}_1)-(a_3\tilde{u}-\tilde{a}_3u) - (a_1\tilde{a}_2-a_2\tilde{a}_1)(a_3 + \tilde{a}_3)
\\
0&1&0&a_2\tilde{a}_3-a_3\tilde{a}_2\\
0&0&1&0\\ 
0&0&0&1
\end{pmatrix}.
\end{equation}

In particular, the commutator subgroup $[U_4(\mathbb{Z}/\ell),U_4(\mathbb{Z}/\ell)]$ of $U_4(\mathbb{Z}/\ell)$ is the subgroup of matrices $M$ as  in (\ref{eq:matrixnotation}) for which $a_1=a_2=a_3 = 0$. It follows from (\ref{eq:commutator}) that $[U_4(\mathbb{Z}/\ell),U_4(\mathbb{Z}/\ell)]$ is an abelian subgroup of $U_4(\mathbb{Z}/\ell)$, which means that the second derived subgroup of $U_4(\mathbb{Z}/\ell)$ is trivial. The center $Z(U_4(\mathbb{Z}/\ell))$ consists of all matrices $M$ as in (\ref{eq:matrixnotation}) with $a_1=a_2=a_3=u=w=0$. 

Since we assume $\ell>3$, we have that the lower central $\ell$-series of $U_4(\mathbb{Z}/\ell)$ equals its lower central series:
\begin{eqnarray*}
U_4(\mathbb{Z}/\ell)_1=U_4(\mathbb{Z}/\ell)_{1,\ell} &=&  U_4(\mathbb{Z}/\ell),\\
U_4(\mathbb{Z}/\ell)_2=U_4(\mathbb{Z}/\ell)_{2,\ell} &=& [U_4(\mathbb{Z}/\ell),U_4(\mathbb{Z}/\ell)],\\
U_4(\mathbb{Z}/\ell)_3=U_4(\mathbb{Z}/\ell)_{3,\ell} &=& Z(U_4(\mathbb{Z}/\ell)),\\
U_4(\mathbb{Z}/\ell)_i=U_4(\mathbb{Z}/\ell)_{i,\ell} &=& \{\mathrm{identity}\}, \mbox{ for $i\ge 4$}.
\end{eqnarray*}
\end{remark}

\subsection{Triple Massey products for general groups}
\label{ss:tripleMasseygeneral}

We next define triple Massey products for general abstract (resp. profinite) groups. As in \cite{BleherChinburgGillibert2023}, we follow the sign convention in \cite[\S1]{Kraines1996} rather than in \cite{Dwyer1975}.

\begin{definition}
\label{def:tripleMassey}
Let $\Gamma$ be an abstract (resp. profinite) group, and suppose $\Gamma$ acts trivially on $\mathbb{Z}/\ell$.
Let $\chi_1,\chi_2,\chi_3\in \HH^1(\Gamma,\mathbb{Z}/\ell)=\mathrm{Hom}(\Gamma,\mathbb{Z}/\ell)$.  
\begin{itemize}
\item[(a)]
A \emph{defining system} for the triple Massey product $\langle \chi_1,\chi_2,\chi_3\rangle$ is given by a pair  $\kappa_{1,2},\kappa_{2,3}\in\mathrm{Map}( \Gamma ,\mathbb{Z}/\ell)$  such that there is a (continuous) group homomorphism 
$$\begin{array}{crcc}
\tilde{\rho}:&\Gamma &\to&U_4(\mathbb{Z}/\ell)/Z(U_4(\mathbb{Z}/\ell)\\
&\sigma&\mapsto&\left(\begin{array}{cccc}
1&\chi_1(\sigma)& \kappa_{1,2}(\sigma)&*\\
0&1&\chi_2(\sigma)&\kappa_{2,3}(\sigma)\\
0&0&1&\chi_3(\sigma)\\
0&0&0&1\end{array}\right)
\end{array}$$
\item[(b)]
The \emph{triple Massey product} $\langle \chi_1,\chi_2,\chi_3\rangle$ is the subset of $\HH^2(\Gamma,\mathbb{Z}/\ell)$ consisting of the classes of all $2$-cocycles $\nu$ such that there exists a defining system $\{\kappa_{1,2},\kappa_{2,3}\}$ with
$$\nu(\sigma,\tau)=-\chi_1(\sigma)\kappa_{2,3}(\tau) - \kappa_{1,2}(\sigma)\chi_3(\tau)$$
for all $\sigma,\tau\in\Gamma$. In particular, if no defining system $\{\kappa_{1,2},\kappa_{2,3}\}$ exists then $\langle \chi_1,\chi_2,\chi_3\rangle =\emptyset$.
\end{itemize}
\end{definition}

\begin{remark}
\label{rem:tripleMassey}
Let $\Gamma$ and $\chi_1,\chi_2,\chi_3$ be as in Definition \ref{def:tripleMassey}. 
\begin{itemize}
\item[(a)]
The triple Massey product $\langle \chi_1,\chi_2,\chi_3\rangle$ is not empty if and only if $\chi_1 \cup \chi_2 = \chi_2 \cup \chi_3 = 0$ in $\HH^2(\Gamma,\mathbb{Z}/\ell)$. 

\item[(b)]
Suppose $\{\kappa_{1,2},\kappa_{2,3}\}$ is a defining system for $\langle \chi_1,\chi_2,\chi_3\rangle$ and $\tilde{\rho}$ is as in Definition \ref{def:tripleMassey}(a). Then there exists $\rho\in\mathrm{Hom}(\Gamma,U_4(\mathbb{Z}/\ell))$ lifting $\tilde{\rho}$ if and only if the $2$-cocycle $\nu$ in Definition \ref{def:tripleMassey}(b) is the coboundary of a map $\kappa_{1,3}\in\mathrm{Map}(\Gamma , \mathbb{Z}/\ell)$.  

Thus  $\langle \chi_1,\chi_2,\chi_3\rangle$ contains $0$ if and only if there exists $\tilde{\rho}$ as in Definition \ref{def:tripleMassey}(a) that can be lifted to $\rho\in\mathrm{Hom}(\Gamma,U_4(\mathbb{Z}/\ell))$. For more details, see \cite[Thm. 2.4]{Dwyer1975} and also \cite[Lemma 4.2]{MinacTan2015}.

\item[(c)]
Suppose $\kappa=\{\kappa_{1,2},\kappa_{2,3}\}$ is a defining system for $\langle \chi_1,\chi_2,\chi_3\rangle$. Then all defining systems can be obtained from $\kappa$ by adding a morphism $f_{1,2}\in\mathrm{Hom}(\Gamma, \mathbb{Z}/\ell)$ to $\kappa_{1,2}$ and by adding a morphism $f_{2,3}\in\mathrm{Hom}(\Gamma, \mathbb{Z}/\ell)$ to $\kappa_{2,3}$. This means that the 2-cocycle $\nu$ from Definition \ref{def:tripleMassey}(b) gives a single well-defined element $[\nu]$ in the quotient group 
$$\frac{\HH^2(\Gamma,\mathbb{Z}/\ell)}{\HH^1(\Gamma,\mathbb{Z}/\ell)\cup \chi_3 + \chi_1 \cup \HH^1(\Gamma,\mathbb{Z}/\ell)}.$$
\end{itemize}
\end{remark}

\begin{remark}
\label{rem:wretched}
Let $\Omega$ be an abstract (resp. profinite) group, and assume that $\Omega$ acts trivially on $\mathbb{Z}/\ell$. 
Suppose $\bar{\chi}_1,\bar{\chi}_2,\bar{\chi}_3\in \mathrm{Hom}(\Omega,\mathbb{Z}/\ell)$ are characters such that the triple Massey product $\langle\bar{\chi}_1,\bar{\chi}_2,\bar{\chi}_3\rangle$ contains $0$. Let 
$$\rho_\Omega = \rho_\Omega(\bar{\chi}_1,\bar{\chi}_2,\bar{\chi}_3)\in\mathrm{Hom}(\Omega, U_4(\mathbb{Z}/\ell))$$
be a morphism that is associated with $(\bar{\chi}_1,\bar{\chi}_2,\bar{\chi}_3)$ in the sense that the entries above the diagonal are given by $\bar{\chi}_1,\bar{\chi}_2,\bar{\chi}_3$.
For $\sigma,\tau,\gamma\in \Omega$, define 
\begin{equation}
\label{eq:triplecomm}
c_{\sigma,\tau,\gamma}=
\mathrm{det}\left(\begin{array}{rcc}-\bar{\chi}_1(\gamma)&0&\bar{\chi}_3(\gamma)\\ \bar{\chi}_1(\sigma)&\bar{\chi}_2(\sigma)&\bar{\chi}_3(\sigma)\\ \bar{\chi}_1(\tau)&\bar{\chi}_2(\tau)&\bar{\chi}_3(\tau)\end{array}\right).
\end{equation}
It follows from (\ref{eq:commutator}) that $\rho_\Omega([[\sigma,\tau],\gamma])$ is the element in $Z(U_4(\mathbb{Z}/\ell))=U_4(\mathbb{Z}/\ell)_3=U_4(\mathbb{Z}/\ell)_{3,\ell}$ whose entry at the $(1,4)$ position is equal to $c_{\sigma,\tau,\gamma}$ from (\ref{eq:triplecomm}). 
\begin{enumerate}
\item[(a)] We obtain a well-defined (continuous) group homomorphism
$$h : \Omega_3\to\mathbb{Z}/\ell\quad\mbox{with} \quad h([[\sigma,\tau],\gamma]) = c_{\sigma,\tau,\gamma} \quad \mbox{for all $\sigma,\tau,\gamma\in \Omega$.}$$
We note that $h$ only depends on $\bar{\chi}_1,\bar{\chi}_2,\bar{\chi}_3$ but not on $\rho_\Omega$. Moreover, $h$ factors through the quotient group $\Omega_3/\Omega_4$.
\item[(b)] The homomorphism $h$ in part (a) can be 
extended to a well-defined (continuous) group homomorphism
$$h_\ell:\Omega_{3,\ell}\to \mathbb{Z}/\ell.$$ 
This is because $\rho_\Omega$ sends $\Omega_{3,\ell}$ to $U_4(\mathbb{Z}/\ell)_{3,\ell}=Z(U_4(\mathbb{Z}/\ell))$.  Hence, for $\omega \in \Omega_{3,\ell}$,  we just let $h_\ell(\omega)$ be the entry at the $(1,4)$ position of $\rho_\Omega(\omega)$.
Similarly to part (a), $h_\ell$ only depends on $\bar{\chi}_1,\bar{\chi}_2,\bar{\chi}_3$, and $h_\ell$ factors through the quotient group $\Omega_{3,\ell}/\Omega_{4,\ell}$.
\end{enumerate}
\end{remark}

\subsection{Triple Massey products for semidirect products}
\label{ss:tripleMasseysemidirect}

The following result relates the existence and vanishing of triple Massey products for semidirect products $\Omega\rtimes_\varphi G$ to the lower central $\ell$-series of $\Omega$.

\begin{lemma}
\label{lem:cutdowngroup}
Let $\Omega$ and $G$ be both abstract $($resp. profinite$)$ groups, and assume Notation $\ref{not:groupgeneration}$ and Definitions $\ref{def:lowercentral}$ and $\ref{def:tripleMassey}$. Suppose $\varphi\in \mathrm{Hom}(G,\mathrm{Aut}(\Omega))$ is a group action, and assume that $\Omega$ and $G$ act trivially on $\mathbb{Z}/\ell$.
\begin{itemize}
\item[(a)] If $G=G(i,\ell)$ for some $i\ge 2$, then all $\bar{\chi}\in\mathrm{Hom}(\Omega,\mathbb{Z}/\ell)$ extend to characters $\chi\in\mathrm{Hom}(\Omega\rtimes_\varphi G,\mathbb{Z}/\ell)$.

\item[(b)] If $G=G(3,\ell)$ and if $\bar{\chi}_1,\bar{\chi}_2\in \mathrm{Hom}(\Omega,\mathbb{Z}/\ell)$ are characters such that $\bar{\chi}_1\cup\bar{\chi}_2=0$, then $\bar{\chi}_1,\bar{\chi}_2$ extend to characters $\chi_1,\chi_2\in\mathrm{Hom}(\Omega\rtimes_\varphi G,\mathbb{Z}/\ell)$ with the same property. 

In particular, if $\bar{\chi}_1,\bar{\chi}_2,\bar{\chi}_3\in \mathrm{Hom}(\Omega,\mathbb{Z}/\ell)$ are such that the triple Massey product $\langle\bar{\chi}_1,\bar{\chi}_2,\bar{\chi}_3\rangle\neq\emptyset$, then $\bar{\chi}_1,\bar{\chi}_2,\bar{\chi}_3$ extend to characters $\chi_1,\chi_2,\chi_3\in\mathrm{Hom}(\Omega\rtimes_\varphi G,\mathbb{Z}/\ell)$ with the same property.

\item[(c)] If $G=G(4,\ell)$ and if $\bar{\chi}_1,\bar{\chi}_2,\bar{\chi}_3\in \mathrm{Hom}(\Omega,\mathbb{Z}/\ell)$ are characters such that the triple Massey product $\langle\bar{\chi}_1,\bar{\chi}_2,\bar{\chi}_3\rangle$ contains $0$, then $\bar{\chi}_1,\bar{\chi}_2,\bar{\chi}_3$ extend to characters $\chi_1,\chi_2,\chi_3\in\mathrm{Hom}(\Omega\rtimes_\varphi G,\mathbb{Z}/\ell)$ with the same property.
\end{itemize}
\end{lemma}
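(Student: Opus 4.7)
All three parts follow from a single observation. For $n \in \{2,3,4\}$ and $\ell > 3$, the group $U_n(\mathbb{Z}/\ell)$ has nilpotency class $n-1$ and exponent $\ell$ (this is Remark~\ref{rem:U4} for $n=4$, and is inherited by the subgroups $U_2(\mathbb{Z}/\ell)\cong \mathbb{Z}/\ell$ and $U_3(\mathbb{Z}/\ell)$). Hence any homomorphism $\bar\rho \colon \Omega \to U_n(\mathbb{Z}/\ell)$ kills $\Omega_{n,\ell}$ and therefore factors through the finite quotient $\Omega/\Omega_{n,\ell}$. When $G = G(n,\ell)$, every $g \in G$ satisfies $g.\omega \equiv \omega \pmod{\Omega_{n,\ell}}$, so $\bar\rho(g.\omega) = \bar\rho(\omega)$. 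Defining $\rho(\omega,g) := \bar\rho(\omega)$ then yields
\[
\rho\bigl((\omega_1, g_1)(\omega_2, g_2)\bigr) \;=\; \bar\rho\bigl(\omega_1\,(g_1.\omega_2)\bigr) \;=\; \bar\rho(\omega_1)\,\bar\rho(\omega_2) \;=\; \rho(\omega_1,g_1)\,\rho(\omega_2,g_2),
\]
so $\rho \in \mathrm{Hom}(\Omega \rtimes_\varphi G, U_n(\mathbb{Z}/\ell))$ extends $\bar\rho$.

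\textbf{Deductions of (a) and (b).} For (a), apply the observation with $n=2$ and $\bar\rho = \bar\chi$; the reduction of general $i \ge 2$ to $i = 2$ uses the inclusion $G(i,\ell) \subseteq G(2,\ell)$, which holds because the natural quotient $\Omega/\Omega_{i,\ell} \twoheadrightarrow \Omega/\Omega_{2,\ell}$ is $G$-equivariant. For (b), the hypothesis $\bar\chi_1 \cup \bar\chi_2 = 0$ is equivalent, via the Heisenberg central extension $0 \to \mathbb{Z}/\ell \to U_3(\mathbb{Z}/\ell) \to (\mathbb{Z}/\ell)^2 \to 0$, to the existence of a lift $\bar\rho \colon \Omega \to U_3(\mathbb{Z}/\ell)$ of $(\bar\chi_1, \bar\chi_2)$. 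The engine with $n=3$ yields $\rho \colon \Omega\rtimes_\varphi G \to U_3(\mathbb{Z}/\ell)$ whose upper-diagonal entries are the canonical extensions $\chi_i(\omega, g) := \bar\chi_i(\omega)$ from (a), so $\chi_1 \cup \chi_2 = 0$. The ``in particular'' clause follows from Remark~\ref{rem:tripleMassey}(a): non-emptiness of $\langle \bar\chi_1, \bar\chi_2, \bar\chi_3\rangle$ gives $\bar\chi_1 \cup \bar\chi_2 = 0$ and $\bar\chi_2 \cup \bar\chi_3 = 0$, and one applies (b) to each pair, noting that the two extensions of $\bar\chi_2$ one obtains coincide because both equal the canonical one.

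\textbf{Part (c) and the main difficulty.} For (c), the assumption $0 \in \langle \bar\chi_1, \bar\chi_2, \bar\chi_3 \rangle$ provides, by Remark~\ref{rem:tripleMassey}(b), a homomorphism $\bar\rho \colon \Omega \to U_4(\mathbb{Z}/\ell)$ realising $(\bar\chi_1, \bar\chi_2, \bar\chi_3)$ on the upper diagonal. The engine with $n=4$ extends this to $\rho \colon \Omega\rtimes_\varphi G \to U_4(\mathbb{Z}/\ell)$, whose upper-diagonal entries are $\chi_1, \chi_2, \chi_3$, witnessing $0 \in \langle \chi_1, \chi_2, \chi_3 \rangle$. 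I do not anticipate any serious obstacle; the only point that requires attention is the alignment of the nilpotency class and exponent of $U_n(\mathbb{Z}/\ell)$---equivalently, the triviality of $U_n(\mathbb{Z}/\ell)_{n,\ell}$---with the appropriate stage $G = G(n,\ell)$ of the lower central $\ell$-series of $\Omega$ featuring in each hypothesis.
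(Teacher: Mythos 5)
Your proof is correct and follows essentially the same route as the paper: the paper's argument also reduces to the observation that $U_n(\mathbb{Z}/\ell)_{n,\ell}$ is trivial for $n\le 4$ and $\ell>3$, so that the relevant homomorphism factors through $\Omega/\Omega_{n,\ell}$, on which $G=G(n,\ell)$ acts trivially; the paper phrases your formula $\rho(\omega,g)=\bar\rho(\omega)$ via the identification $(\Omega\rtimes_\varphi G)/\Omega_{n,\ell}=(\Omega/\Omega_{n,\ell})\times G$. Your reduction of part (a) to $i=2$ via $G(i,\ell)\subseteq G(2,\ell)$ is a cosmetic variant of the paper's direct use of the factorization of $\bar\chi$ through $\Omega/\Omega_{i,\ell}$.
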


\begin{proof}
Fix $i\ge 2$. Identifying $\Omega_{i,\ell}$ with the (closed) normal subgroup $\Omega_{i,\ell}\times 1$ of $\Omega \rtimes_{\varphi} G$, we have
\begin{equation}
\label{eq:lame1}
(\Omega \rtimes_{\varphi} G)/\Omega_{i,\ell} = (\Omega/\Omega_{i,\ell}) \rtimes_{\varphi_{i,\ell}} G.
\end{equation}

For part (a), suppose $G=G(i,\ell)$ for some $i\ge 2$. Then (\ref{eq:lame1}) implies
\begin{equation}
\label{eq:lame2}
(\Omega \rtimes_{\varphi} G)/\Omega_{i,\ell} = (\Omega/\Omega_{i,\ell}) \times G.
\end{equation}
Since every character $\bar{\chi}\in\mathrm{Hom}(\Omega,\mathbb{Z}/\ell)$ factors uniquely through $\Omega/\Omega_{i,\ell}$, we can use (\ref{eq:lame2}) to extend $\bar{\chi}$ to a character $\chi\in\mathrm{Hom}(\Omega\rtimes_\varphi G,\mathbb{Z}/\ell)$ by definining
$$\chi(\omega,g) := \bar{\chi}(\omega)$$
for all $\omega\in \Omega$ and $g\in G=G(i,\ell)$.

For part (b), we use that the data of two characters $\bar{\chi}_1,\bar{\chi}_2\in \mathrm{Hom}(\Omega,\mathbb{Z}/\ell)$ such that $\bar{\chi}_1\cup \bar{\chi}_2=0$ is equivalent to the data of a (continuous) group homomorphism 
$$f(\bar{\chi}_1,\bar{\chi}_2):\quad\Omega\to U_3(\Z/\ell)$$
that is associated with $(\bar{\chi}_1,\bar{\chi}_2)$ in the sense that the entries above the diagonal are given by $\bar{\chi}_1,\bar{\chi}_2$.
Since $\ell>3$, and hence $\ell \ne 2$, the third term in the lower central $\ell$-series of $U_3(\Z/\ell)$ is trivial, which implies that the data of $f(\bar{\chi}_1,\bar{\chi}_2)$ is equivalent to the data of a (continuous) group homomorphism 
$$\Omega/\Omega_{3,\ell}\to U_3(\Z/\ell)$$
that is associated with $(\bar{\chi}_1,\bar{\chi}_2)$. Using part (a), and in particular (\ref{eq:lame2}), for $i=3$, part (b) follows. 

Finally, using Definition \ref{def:tripleMassey} and Remark \ref{rem:tripleMassey}(b), together with part (a), and in particular (\ref{eq:lame2}), for $i=4$, part (c) follows. 
\end{proof}

The following result gives a sufficient criterion for the non-vanishing of triple Massey products for semidirect products.

\begin{proposition}
\label{prop:sufficient_general}
Let $\Omega$, $G$ and $\varphi$ be as in Lemma $\ref{lem:cutdowngroup}$, and assume that $\Omega$ and $G$ act trivially on $\mathbb{Z}/\ell$. Suppose $\bar{\chi}_1,\bar{\chi}_2,\bar{\chi}_3\in \mathrm{Hom}(\Omega,\mathbb{Z}/\ell)$ are characters satisfying the following three conditions:
\begin{itemize}
\item[(i)] the triple Massey product $\langle\bar{\chi}_1,\bar{\chi}_2,\bar{\chi}_3\rangle$ is not empty and contains $0$, and
\item[(ii)] none of the $\bar{\chi}_i$ are zero and one of the two sets $\{\bar{\chi}_1,\bar{\chi}_2\}$ or $\{\bar{\chi}_2,\bar{\chi}_3\}$ is $(\mathbb{Z}/\ell)$-linearly independent, and
\item[(iii)] there exist $\Phi\in G(3,\ell)$ and $\omega_0\in \Omega$ such that 
$$\bar{\chi}_1(\omega_0)=\bar{\chi}_2(\omega_0)=\bar{\chi}_3(\omega_0)=0\quad\mbox{and}\quad
h_\ell(\tau_{3,\ell}(\Phi)(\omega_0))\ne 0$$
where $\tau_{3,\ell}$ is as in Remark $\ref{rem:Johnsonhomgeneral}$ and $h_\ell$ is as in Remark $\ref{rem:wretched}(b)$.
\end{itemize}
If there exist characters $\chi_1,\chi_2,\chi_3\in \mathrm{Hom}(\Omega \rtimes_{\varphi} G,\mathbb{Z}/\ell)$ extending $\bar{\chi}_1,\bar{\chi}_2,\bar{\chi}_3$ such that $\langle \chi_1,\chi_2,\chi_3\rangle\neq\emptyset$, then the triple Massey product $\langle \chi_1,\chi_2,\chi_3\rangle $ does not contain $0$. This occurs in particular if $G=G(3,\ell)$.
\end{proposition}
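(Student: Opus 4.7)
The plan is to argue by contradiction.  Assume extensions $\chi_1,\chi_2,\chi_3\in\mathrm{Hom}(\Omega\rtimes_\varphi G,\mathbb{Z}/\ell)$ of $\bar\chi_1,\bar\chi_2,\bar\chi_3$ exist with $\langle\chi_1,\chi_2,\chi_3\rangle$ non-empty and containing $0$; then Remark~\ref{rem:tripleMassey}(b) furnishes a (continuous) homomorphism $\rho\colon\Omega\rtimes_\varphi G\to U_4(\mathbb{Z}/\ell)$ whose superdiagonal entries are $\chi_1,\chi_2,\chi_3$.  Let $\Phi\in G(3,\ell)$ and $\omega_0\in\Omega$ be as in condition~(iii), and write $\rho_\Omega$ for the restriction of $\rho$ to $\Omega$.

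The main step is to show that $\chi_1(\Phi)=\chi_2(\Phi)=\chi_3(\Phi)=0$.  Inside the semidirect product one has $[\Phi,\omega]=(\Phi.\omega)\omega^{-1}$, which by Remark~\ref{rem:Johnsonhomgeneral}(a) lies in $\Omega_{3,\ell}$ for every $\omega\in\Omega$.  By Remark~\ref{rem:U4}, $\rho_\Omega(\Omega_{3,\ell})\subseteq U_4(\mathbb{Z}/\ell)_{3,\ell}=Z(U_4(\mathbb{Z}/\ell))$, so the matrix commutator $[\rho(\Phi),\rho(\omega)]$ must be central in $U_4(\mathbb{Z}/\ell)$ for every $\omega$.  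Reading off the $(1,3)$ and $(2,4)$ entries of this commutator via formula~(\ref{eq:commutator}) and letting $\omega$ range over $\Omega$ translates the centrality condition into the pair of character identities
\begin{equation*}
\chi_1(\Phi)\cdot\bar\chi_2=\chi_2(\Phi)\cdot\bar\chi_1\quad\text{and}\quad \chi_2(\Phi)\cdot\bar\chi_3=\chi_3(\Phi)\cdot\bar\chi_2
\end{equation*}
in $\mathrm{Hom}(\Omega,\mathbb{Z}/\ell)$.  By condition~(ii), at least one of $\{\bar\chi_1,\bar\chi_2\}$ or $\{\bar\chi_2,\bar\chi_3\}$ is $\mathbb{Z}/\ell$-linearly independent, and $\bar\chi_2\ne 0$; a short case analysis on these identities then forces $\chi_1(\Phi)=\chi_2(\Phi)=\chi_3(\Phi)=0$.

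With this in hand I specialize to $\omega_0$.  Since $\bar\chi_i(\omega_0)=0$ for every $i$, both $\rho(\Phi)$ and $\rho(\omega_0)$ have vanishing superdiagonal, so each lies in $[U_4(\mathbb{Z}/\ell),U_4(\mathbb{Z}/\ell)]$, and every summand on the right of the $(1,4)$-entry formula in~(\ref{eq:commutator}) vanishes.  Hence $[\rho(\Phi),\rho(\omega_0)]=I$.  However, by Remark~\ref{rem:wretched}(b) the $(1,4)$-entry of $\rho_\Omega((\Phi.\omega_0)\omega_0^{-1})=[\rho(\Phi),\rho(\omega_0)]$ equals $h_\ell((\Phi.\omega_0)\omega_0^{-1})=h_\ell(\tau_{3,\ell}(\Phi)(\omega_0))$, which by condition~(iii) is non-zero.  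This contradiction completes the main assertion, and the final sentence is immediate from Lemma~\ref{lem:cutdowngroup}(b).  The conceptual heart of the argument is the pair of character identities extracted from the centrality of $[\rho(\Phi),\rho(\omega)]$, which encode the fact that $\Phi\in G(3,\ell)$ acts trivially on $\Omega/\Omega_{3,\ell}$; once linear independence and non-vanishing in~(ii) are used to deduce $\chi_i(\Phi)=0$, the contradiction with~(iii) is just a two-line $U_4$ calculation.
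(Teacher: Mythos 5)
Your proof is correct and follows essentially the same route as the paper: the paper likewise takes a lift $\rho$ of a defining system, sets $M=\rho(1,\Phi)$, uses the fact that $\rho_\Omega(\tau_{3,\ell}(\Phi)(\omega))$ is central together with formula (\ref{eq:commutator}) and condition (ii) to force the superdiagonal of $M$ to vanish, and then derives the contradiction with condition (iii) from the commutativity of $[U_4(\mathbb{Z}/\ell),U_4(\mathbb{Z}/\ell)]$ at $\omega_0$. The only differences are cosmetic (you phrase the vanishing of the superdiagonal as $\chi_i(\Phi)=0$ and appeal explicitly to Lemma \ref{lem:cutdowngroup}(b) for the final sentence).
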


\begin{proof}
By condition (i), the morphism $h_\ell$ from Remark \ref{rem:wretched}(b) is well-defined. Define $\Gamma:=\Omega \rtimes_{\varphi} G$. Let $\{\kappa_{1,2},\kappa_{2,3}\}$ be a defining system for $\langle \chi_1,\chi_2,\chi_3\rangle$, and let 
$$\tilde{\rho}:\,\Gamma\to U_4(\mathbb{Z}/\ell)/Z(U_4(\mathbb{Z}/\ell)$$ 
be the associated (continuous) group homomorphism from Definition \ref{def:tripleMassey}(a). We prove that $\tilde{\rho}$ cannot be lifted to a morphism $\rho\in \mathrm{Hom}\left(\Gamma,U_4(\mathbb{Z}/\ell)\right)$. Suppose to the contrary that such a $\rho$ exists. Let $\rho_\Omega\in \mathrm{Hom}\left(\Omega,U_4(\mathbb{Z}/\ell)\right)$ be the restriction of $\rho$ to $\Omega$. Since $U_4(\mathbb{Z}/\ell)_{4,\ell}$ is trivial, $\rho_\Omega$ factors uniquely through $\Omega/\Omega_{4,\ell}$. Define $M:=\rho(1,\Phi)$. Writing $\Phi.\omega=\varphi(\Phi)(\omega)$ for all $\omega\in\Omega$ and using that $\Phi\in G(3,\ell)$ by condition (iii), we have
\begin{equation}
\label{eq:dumb1}
M\rho_\Omega(\omega)M^{-1}\rho_\Omega(\omega)^{-1} = \rho_\Omega\left((\Phi.\omega)\,\omega^{-1}\right) = \rho_\Omega\left(\tau_{3,\ell}(\Phi)(\omega)\right) \quad\mbox{for all $\omega\in \Omega$}.
\end{equation}
In particular, since $h_\ell(\tau_{3,\ell}(\Phi)(\omega_0))\ne 0$ by condition (iii), we have
\begin{equation}
\label{eq:dumb2}
M\rho_\Omega(\omega_0)M^{-1}\rho_\Omega(\omega_0)^{-1}\ne \mathrm{identity}
\end{equation}
in $U_4(\mathbb{Z}/\ell)$. On the other hand, since $\rho_\Omega\left(\tau_{3,\ell}(\Phi)(\omega)\right)$ lies in $U_4(\mathbb{Z}/\ell)_{3,\ell}=Z(U_4(\mathbb{Z}/\ell))$ for all $\omega\in \Omega$, we obtain from (\ref{eq:dumb1}) that
$$M\rho_\Omega(\omega)M^{-1}\rho_\Omega(\omega)^{-1} \equiv \mathrm{identity} \mod Z(U_4(\mathbb{Z}/\ell)) \quad\mbox{for all $\omega\in \Omega$.}$$
Writing $M=M(a_1,a_2,a_3,u,v,w)$ as in (\ref{eq:matrixnotation}) and using (\ref{eq:commutator}), we get 
$$a_1\bar{\chi}_2(\omega)-a_2\bar{\chi}_1(\omega) = 0 = a_2\bar{\chi}_3(\omega)-a_3\bar{\chi}_2(\omega) \quad \mbox{for all $\omega\in \Omega$. }$$
By condition (ii), this implies $a_1=a_2=a_3=0$, which means that $M\in [ U_4(\mathbb{Z}/\ell),U_4(\mathbb{Z}/\ell)]$. Because $\bar{\chi}_1(\omega_0)=\bar{\chi}_2(\omega_0)=\bar{\chi}_3(\omega_0)=0$ by condition (iii), we have $\rho_\Omega(\omega_0)\in [ U_4(\mathbb{Z}/\ell),U_4(\mathbb{Z}/\ell)]$. Since $[ U_4(\mathbb{Z}/\ell),U_4(\mathbb{Z}/\ell)]$ is commutative, it follows that 
$$M\rho_\Omega(\omega_0)M^{-1}\rho_\Omega(\omega_0)^{-1}=\mathrm{identity}$$
in $U_4(\mathbb{Z}/\ell)$, which contradicts (\ref{eq:dumb2}) and finishes the proof of Propositon \ref{prop:sufficient_general}.
\end{proof}

\begin{remark}
\label{rem:condition_iii}
Conditions (i) and (ii) of Proposition \ref{prop:sufficient_general} imply that $h_\ell$ from Remark \ref{rem:wretched}(b) is surjective.
To show this, it is enough to prove that there exist $\sigma,\tau,\gamma\in \Omega$ such that $c_{\sigma,\tau,\gamma}\ne 0$. 

Suppose first that the three characters $\bar{\chi}_1,\bar{\chi}_2,\bar{\chi}_3$ are $(\mathbb{Z}/\ell)$-linearly independent. Then we can choose $\sigma,\tau,\gamma\in \Omega$ such that the matrix in (\ref{eq:triplecomm}) becomes the identity matrix. Thus $c_{\sigma,\tau,\gamma}=1$. 

Suppose now that only two of the three characters $\bar{\chi}_1,\bar{\chi}_2,\bar{\chi}_3$ are $(\mathbb{Z}/\ell)$-linearly independent. Define
$$D_{12}(\sigma,\tau)=\mathrm{det} \begin{pmatrix} \bar{\chi}_1(\sigma)&\bar{\chi}_2(\sigma)\\ \bar{\chi}_1(\tau)&\bar{\chi}_2(\tau)\end{pmatrix}\quad\mbox{and}\quad
D_{23}(\sigma,\tau)=\mathrm{det} \begin{pmatrix} \bar{\chi}_2(\sigma)&\bar{\chi}_3(\sigma)\\ \bar{\chi}_2(\tau)&\bar{\chi}_3(\tau)\end{pmatrix}.$$
By condition (ii) of Proposition \ref{prop:sufficient_general}, there exist $\sigma,\tau\in \Omega$ such that $D_{12}(\sigma,\tau)\ne 0$ or $D_{23}(\sigma,\tau)\ne 0$. 

If both $D_{12}(\sigma,\tau)\ne 0$ and $D_{23}(\sigma,\tau)\ne 0$, we define
$$\gamma=\sigma^{\bar{\chi}_2(\tau)}\cdot \tau^{-\bar{\chi}_2(\sigma)}.$$
Then $\bar{\chi}_1(\gamma)=D_{12}(\sigma,\tau)$ and $\bar{\chi}_3(\gamma)=-D_{23}(\sigma,\tau)$, and hence $c_{\sigma,\tau,\gamma}=-2\,D_{12}(\sigma,\tau)D_{23}(\sigma,\tau)\ne 0$ since $\ell >3$. 

Suppose next that $D_{12}(\sigma,\tau)= 0$ but $D_{23}(\sigma,\tau)\ne 0$. By condition (ii) of Proposition \ref{prop:sufficient_general}, there exists $\gamma\in\pi_1(\bar{X})$ such that $\bar{\chi}_1(\gamma)\ne 0$. Hence $c_{\sigma,\tau,\gamma}=-\bar{\chi}_1(\gamma)D_{23}(\sigma,\tau)\ne 0$. 
The case when $D_{12}(\sigma,\tau)\ne 0$ but $D_{23}(\sigma,\tau)= 0$ is treated similarly, by using condition (ii) of Proposition \ref{prop:sufficient_general} to find $\gamma\in\Omega$ with $\bar{\chi}_3(\gamma)\ne 0$.
\end{remark}

The following remark reduces Proposition \ref{prop:sufficient_general} to a criterion involving a certain quotient group of the semidirect product $\Omega\rtimes_{\varphi}G$, which is finite when $\Omega$ is finitely generated as abstract (resp. profinite) group. This will be crucial when applying Hilbert's irreducibility theorem. 

\begin{remark}
\label{rem:crucial}
Let $\Omega$, $G$ and $\varphi$ be as in Proposition \ref{prop:sufficient_general}. We obtain a short exact sequence of
abstract (resp. profinite) groups:
\begin{equation}
\label{eq:semidirect1}
1\to \Omega\to \Omega \rtimes_{\varphi} G \to G \to 1.
\end{equation}
Identifying $\Omega_{4,\ell}$ with the (closed) normal subgroup $\Omega_{4,\ell}\times 1$ of $\Omega \rtimes_{\varphi} G$, we see that
\begin{equation}
\label{eq:cutdown1}
(\Omega \rtimes_{\varphi} G)/\Omega_{4,\ell} = (\Omega/\Omega_{4,\ell}) \rtimes_{\varphi_{4,\ell}} G
\end{equation}
where $\varphi_{4,\ell}\in\mathrm{Hom}(G,\mathrm{Aut}(\Omega/\Omega_{4,\ell}))$ is as in Definition \ref{def:lowercentral}(c). Since $G(4,\ell)=\mathrm{kernel}(\varphi_{4,\ell})$, it follows that $\varphi_{4,\ell}$ factors uniquely through $G/G(4,\ell)$. We define
\begin{equation}
\label{eq:cutcut}
\overline{\Omega}:=\Omega/\Omega_{4,\ell},\quad \overline{G}:=G/G(4,\ell),\quad\mbox{and}\quad \overline{\varphi}:=\varphi_{4,\ell}.
\end{equation}
Using (\ref{eq:cutdown1}) and (\ref{eq:cutcut}), the short exact sequence (\ref{eq:semidirect1}) induces a short exact sequence of abstract (resp. profinite) groups:
\begin{equation}
\label{eq:semidirect2}
1\to \overline{\Omega}\to \overline{\Omega} \rtimes_{\overline{\varphi}} \overline{G} \to \overline{G} \to 1.
\end{equation}
If  $\Omega$ is \emph{finitely generated} as abstract (resp. profinite) group, then Remark \ref{rem:lowercentral}(b) shows that (\ref{eq:semidirect2}) is a short exact sequence of finite groups. 

Since $U_4(\mathbb{Z}/\ell)_{4,\ell}$ is trivial and since $h_\ell$ factors through the quotient group $\Omega_{3,\ell}/\Omega_{4,\ell}$ and  $\tau_{3,\ell}$ factors through the quotient group $G(3,\ell)/G(4,\ell)$, the data of three characters $\bar{\chi}_1,\bar{\chi}_2,\bar{\chi}_3\in \mathrm{Hom}(\Omega,\mathbb{Z}/\ell)$ satisfying conditions (i), (ii) and (iii) of Proposition \ref{prop:sufficient_general} is equivalent to the data of three characters $\bar{\chi}_1,\bar{\chi}_2,\bar{\chi}_3\in \mathrm{Hom}(\overline{\Omega},\mathbb{Z}/\ell)$ satisfying these conditions for $\overline{\Omega}$, $\overline{G}$ and $\overline{\varphi}$ instead of $\Omega$, $G$ and $\varphi$.
\end{remark}

The next two remarks discuss profinite completions.

\begin{remark}
\label{rem:profcompl1}
For any group $\Gamma$ (resp. any group homomorphism $f:\Gamma_1\to\Gamma_2$), we denote by $\widehat{\Gamma}$ (resp. $\widehat{f}:\widehat{\Gamma_1}\to \widehat{\Gamma_2}$) its profinite completion. For any subgroup $H\le \Gamma$, we denote by $H^{\mathrm{cl}}$ the \emph{closure} of the image of $H$ in $\widehat{\Gamma}$. 

\begin{itemize}
\item[(a)] 
If $H$ is a finite-index subgroup of $\Gamma$ then $H^{\mathrm{cl}} \cong \widehat{H}$. 

\item[(b)]
If $\Gamma=\Omega\rtimes_{\varphi}G$ is a semidirect product and $\Omega$ is \emph{finitely generated}, then the closure $\Omega^{\mathrm{cl}}$ of the image of $\Omega$ in $\widehat{\Gamma}$ satisfies $\Omega^{\mathrm{cl}} \cong \widehat{\Omega}$.

One way to see this is that $\Omega$ has a cofinal sequence $\Omega_1 > \Omega_2 > \cdots$ of finite-index characteristic subgroups. This implies that  $\{\Omega_i\rtimes_{\varphi} \Gamma\}_{i\ge 1}$ is a sequence of normal finite-index subgroups of $\Gamma$ such that for any normal finite-index subgroup $N$ of $\Omega$ there exists $i_0\ge 1$ satisfying $(\Omega_{i_0}\rtimes_{\varphi} \Gamma) \cap \Omega <N$.
\end{itemize}
\end{remark}

\begin{remark}
\label{rem:profcompl2}
Let $\Omega$, $G$ and $\varphi$ be as in Proposition \ref{prop:sufficient_general}. Suppose $\Omega$ and $G$ are abstract groups, and that $\Omega$ is \emph{finitely generated}. Using Remark \ref{rem:profcompl1}(b), we obtain a commutative diagram 
\begin{equation}
\label{eq:complete1}
\xymatrix{
1\ar[r] & \Omega \ar[r]\ar[d] & \Omega \rtimes_{\varphi} G \ar[r]\ar[d] & G \ar[r]\ar[d] & 1\\
1\ar[r]& \widehat{\Omega} \ar[r] & \widehat{\Omega} \rtimes_{\widehat{\varphi}} \widehat{G} \ar[r] & \widehat{G} \ar[r] & 1}
\end{equation}
with exact rows. In particular, we have an isomorphism of profinite groups
$$\widehat{\Omega \rtimes_{\varphi} G} \cong \widehat{\Omega} \rtimes_{\widehat{\varphi}}\widehat{G}.$$
Using Remark \ref{rem:lowercentral}(b) and Remark \ref{rem:profcompl1}(a), we obtain the following identifications of finite groups:
$$\Omega/\Omega_{4,\ell}=\widehat{\Omega}/\widehat{\Omega}_{4,\ell}
\quad\mbox{and}\quad G/G(4,\ell)=\widehat{G}/\widehat{G}(4,\ell).$$
Using the notation from Remark \ref{rem:crucial}, we can also identify
$$\overline{\Omega} \rtimes_{\overline{\varphi}} \overline{G} = (\widehat{\Omega}/\widehat{\Omega}_{4,\ell})\rtimes_{\widehat{\varphi}_{4,\ell}}(\widehat{G}/\widehat{G}(4,\ell)).$$
Therefore, Remark \ref{rem:crucial} implies that the data of three characters $\bar{\chi}_1,\bar{\chi}_2,\bar{\chi}_3\in \mathrm{Hom}(\Omega,\mathbb{Z}/\ell)$ satisfying conditions (i), (ii) and (iii) of Proposition \ref{prop:sufficient_general} is equivalent to the data of three characters $\bar{\chi}_1,\bar{\chi}_2,\bar{\chi}_3\in \mathrm{Hom}(\widehat{\Omega},\mathbb{Z}/\ell)$ satisfying these conditions for $\widehat{\Omega}$, $\widehat{G}$ and $\widehat{\varphi}$ instead of $\Omega$, $G$ and $\varphi$.
\end{remark}

\subsection{Triple Massey products for smooth projective curves}
\label{ss:tripleMasseySmooth} 

Let $F$ be a field with a fixed separable closure $\bar{F}$ inside a fixed algebraic closure $F^{\mathrm{alg}}$, and let $\ell$ be a prime number that is invertible in $F$. Let $X$ be a smooth projective geometrically irreducible curve over $F$, and define 
$$\quad \bar{X}:=X\otimes_F\bar{F}.$$ 
Let $\eta$ be a geometric point of $X$, i.e. a point with values in $\bar{F}$, which we also view as a geometric point of $\bar{X}$. To simplify notation, we will often denote the \'etale fundamental groups by
$$\pi_1(X):=\pi_1(X,\eta) \quad \text{and}\quad  \pi_1(\bar{X}):=\pi_1(\bar{X},\eta).$$
We  have a natural isomorphism of first cohomology groups
$$\HH^1(X,\mathbb{Z}/\ell) \cong \HH^1(\pi_1(X),\mathbb{Z}/\ell),$$
where, for $i\ge 0$, $\HH^i(X,\mathbb{Z}/\ell)$ denotes the $i$-th \'etale cohomology group of $X$ with coefficients in $\mathbb{Z}/\ell$ and $\HH^i(\pi_1(X),\mathbb{Z}/\ell)$ denotes the $i$-th continuous cohomology group of $\pi_1(X)$ with coefficients in $\mathbb{Z}/\ell$. Since $\pi_1(X)$ acts trivially on $\mathbb{Z}/\ell$, we can identify $\HH^1(\pi_1(X),\mathbb{Z}/\ell)=\mathrm{Hom}(\pi_1(X),\mathbb{Z}/\ell)$.
For higher cohomology groups, we have the following result from \cite[\S2.1.2]{AchingerThesis} (see also \cite[\S3]{Achinger2015}):

\begin{proposition}
\label{prop:Achinger}
If $\bar{X}$ is not isomorphic to $\mathbb{P}^1_{\bar{F}}$ then there is a natural isomorphism
$$\HH^i(X,\mathbb{Z}/\ell) \cong \HH^i(\pi_1(X),\mathbb{Z}/\ell)$$
for all $i\ge 1$.
\end{proposition}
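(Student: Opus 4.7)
The plan is to reduce the statement to showing that $\bar{X}$ is a $K(\pi,1)$ for $\mathbb{Z}/\ell$-coefficients, and then transfer the comparison from $\bar{X}$ to $X$ via the Hochschild--Serre spectral sequence.

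\emph{Step 1: the geometric case.} I would first show that the natural comparison map
$$c^i\colon \HH^i(\pi_1(\bar{X}),\mathbb{Z}/\ell)\to\HH^i(\bar{X},\mathbb{Z}/\ell)$$
is an isomorphism for all $i\ge 0$. The hypothesis $\bar{X}\not\cong\mathbb{P}^1_{\bar{F}}$ is equivalent to $\bar{X}$ having genus $g\ge 1$. For $i=0,1$ this is formal: both groups classify $\mathbb{Z}/\ell$-torsors on $\bar{X}$, which are controlled by $\pi_1(\bar{X})$. For $i\ge 3$, both sides vanish: the \'etale cohomological dimension of a smooth proper curve over a separably closed field is at most $2$, and $\pi_1(\bar{X})$ has $\ell$-cohomological dimension $2$ (its maximal prime-to-$\mathrm{char}(F)$ quotient is the corresponding quotient of a genus-$g$ surface group, which is a Poincar\'e duality group of dimension $2$). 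In the critical degree $i=2$, both sides are isomorphic to $\mathbb{Z}/\ell$ by Poincar\'e duality (\'etale on one side and group-theoretic on the other), and $c^\bullet$ is multiplicative; using that the cup product $\HH^1\otimes\HH^1\to\HH^2$ is non-degenerate on both sides, $c^2$ is forced to be non-zero, hence an isomorphism. This is the $K(\pi,1)$ assertion for curves in \cite[\S2.1.2]{AchingerThesis}, which one may also invoke directly.

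\emph{Step 2: Hochschild--Serre.} I would then apply the Hochschild--Serre spectral sequence for the exact sequence
$$1\to\pi_1(\bar{X})\to\pi_1(X)\to G_F\to 1, \qquad G_F:=\Gal(\bar{F}/F),$$
and compare it with the Leray spectral sequence for $X\to\Spec F$ with constant coefficients $\mathbb{Z}/\ell$:
$$E_2^{p,q}(\pi)=\HH^p(G_F,\HH^q(\pi_1(\bar{X}),\mathbb{Z}/\ell))\Rightarrow\HH^{p+q}(\pi_1(X),\mathbb{Z}/\ell),$$
$$E_2^{p,q}(\et)=\HH^p(G_F,\HH^q(\bar{X},\mathbb{Z}/\ell))\Rightarrow\HH^{p+q}(X,\mathbb{Z}/\ell).$$
Since the isomorphisms $c^q$ of Step 1 are natural, they are $G_F$-equivariant for the outer action on $\pi_1(\bar{X})$, so they induce an isomorphism of $E_2$ pages compatible with differentials. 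The induced isomorphism on abutments is the required natural isomorphism $\HH^i(X,\mathbb{Z}/\ell)\cong\HH^i(\pi_1(X),\mathbb{Z}/\ell)$.

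\emph{Main obstacle.} The essential non-formal input is the isomorphism $c^2$ in Step 1: one must identify generators on both sides and verify that $c^2$ matches them. This is exactly where the exclusion of $\mathbb{P}^1$ becomes necessary, since for $\bar{X}=\mathbb{P}^1_{\bar{F}}$ one has $\pi_1^{\et}(\mathbb{P}^1_{\bar{F}})=1$ while $\HH^2(\mathbb{P}^1_{\bar{F}},\mathbb{Z}/\ell)=\mathbb{Z}/\ell$. In arbitrary characteristic, the cleanest route is to cite Achinger's direct proof of the $K(\pi,1)$ property for curves of positive genus; alternatively, one can reduce to characteristic zero via smooth and proper base change and then use Artin's comparison theorem together with the classical fact that a compact orientable surface of positive genus is a topological $K(\pi,1)$.
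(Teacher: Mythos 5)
The paper does not actually prove this proposition: it is quoted verbatim from Achinger's thesis \cite[\S 2.1.2]{AchingerThesis} (see also \cite[\S 3]{Achinger2015}), so there is no in-paper argument to compare against. Your reconstruction is essentially the standard proof of that cited result, and the overall architecture is sound: reduce to the $K(\pi,1)$ property of $\bar{X}$, then transfer to $X$ by matching the Hochschild--Serre spectral sequence for $1\to\pi_1(\bar{X})\to\pi_1(X)\to G_F\to 1$ against the Leray/Hochschild--Serre spectral sequence for $\bar{X}\to X$ (the maps $c^q$ do assemble into a morphism of spectral sequences by functoriality of the comparison, so the abutments agree).

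The one soft spot is in Step 1, in positive characteristic. You invoke ``group-theoretic Poincar\'e duality'' to get $\HH^2(\pi_1(\bar{X}),\mathbb{Z}/\ell)\cong\mathbb{Z}/\ell$ and $\mathrm{cd}_\ell(\pi_1(\bar{X}))=2$, justifying this via the maximal prime-to-$p$ quotient of $\pi_1(\bar{X})$. But the passage from the cohomology of that quotient to the cohomology of the full \'etale fundamental group is not formal (inflation is injective on $\HH^2$ by the five-term sequence, but surjectivity is exactly the kind of statement being proven), so as written the argument risks circularity when $\mathrm{char}(F)=p>0$. The cleaner and standard route, which is what Achinger's criterion amounts to, avoids duality on the group side altogether: one shows every class in $\HH^i(\bar{X},\mathbb{Z}/\ell)$ with $i\ge 1$ dies on a finite \'etale cover. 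For $i=2$ this is concrete: since $g\ge 1$ there is a connected \'etale $\mathbb{Z}/\ell$-cover $f:\bar{Y}\to\bar{X}$, and under the trace isomorphisms $f^*$ on $\HH^2(\cdot,\mathbb{Z}/\ell)\cong\mathbb{Z}/\ell$ is multiplication by $\deg f=\ell$, hence zero; degrees $\ge 3$ vanish by the cohomological dimension bound for curves over a separably closed field. With that substitution (or simply citing Achinger, as you note and as the paper does), your proof is complete.
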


For the remainder of the paper we assume that $\bar{X}$ is not isomorphic to $\mathbb{P}^1_{\bar{F}}$, and we identify $\HH^i(X,\mathbb{Z}/\ell) = \HH^i(\pi_1(X),\mathbb{Z}/\ell)$ for all $i\ge 1$. 

\begin{nota}
\label{not:rationalpoint}
Let $G_F=\mathrm{Gal}(\bar{F}/F)$. We have a short exact sequence of profinite groups and continuous group homomorphisms
$$1\to \pi_1(\bar{X}) \to \pi_1(X) \to G_F\to 1.$$
Assume $X(F) \neq \emptyset$; then the choice of a point $\mathfrak{p}\in X(F)$, viewed as a morphism $\mathfrak{p}:\Spec(F)\to X$, gives a map $\mathfrak{p}_*:G_F\to \pi_1(X)$ which is a section of this exact sequence. In other words,
$$\pi_1(X)=\pi_1(\bar{X})\rtimes_{\varphi} G_F$$
where $\varphi : G_F \to\mathrm{Aut}(\pi_1(\bar{X}))$ sends $\Phi\in G_F$ to the automorphism of $\pi_1(\bar{X})$ given by conjugation with $\mathfrak{p}_*(\Phi)$.
For $\Phi\in G_F$ and $\xi\in \pi_1(\bar{X})$, we use the notation 
$$\Phi.\xi=\varphi(\Phi)(\xi)= \mathfrak{p}_*(\Phi)\,\xi\,\mathfrak{p}_*(\Phi)^{-1}.$$ 

As in Definition \ref{def:lowercentral}(c), for $i\ge 1$, $G_F(i,\ell)$ denotes the closed normal subgroup of $G_F$ consisting of all $\Phi\in G_F$ that act trivially on $\pi_1(\bar{X})/\pi_1(\bar{X})_{i,\ell}$.
\end{nota}

Using Notation \ref{not:rationalpoint}, the following result shows that conditions (i) and (ii) of Proposition \ref{prop:sufficient_general} are necessary for the non-vanishing of triple Massey products for $\pi_1(X)$ when $\Omega=\pi_1(\bar{X})$ and $G=G_F$ provided the $\ell$-torsion $\mathrm{Pic}(\bar{X})[\ell]$ is defined over $F$.

\begin{proposition}
\label{prop:necessary_curves}
Let $\chi_1,\chi_2,\chi_3\in \HH^1(X,\mathbb{Z}/\ell)=\mathrm{Hom}(\pi_1(X),\mathbb{Z}/\ell)$, and let $\bar{\chi}_1,\bar{\chi}_2,\bar{\chi}_3$ denote their restrictions to $\HH^1(\bar{X},\mathbb{Z}/\ell)=\mathrm{Hom}(\pi_1(\bar{X}),\mathbb{Z}/\ell)$. Assume the triple Massey product $\langle\chi_1,\chi_2,\chi_3\rangle$ is not empty. Then
\begin{itemize}
\item[(i)] the triple Massey product $\langle \bar{\chi}_1,\bar{\chi}_2,\bar{\chi}_3 \rangle$ always contains zero. 
\end{itemize}
Suppose additionally that $\mathrm{Pic}(\bar{X})[\ell]$ is defined over $F$ and $\langle \chi_1,\chi_2,\chi_3\rangle$ does not contain zero. Then
\begin{itemize}
\item[(ii)]  none of the $\bar{\chi}_i$ are zero and one of the two sets $\{\bar{\chi}_1,\bar{\chi}_2\}$ or $\{\bar{\chi}_2,\bar{\chi}_3\}$ is $(\mathbb{Z}/\ell)$-linearly independent.
\end{itemize}
\end{proposition}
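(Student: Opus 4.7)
For part (i), the non-emptiness of $\langle\chi_1,\chi_2,\chi_3\rangle$ gives $\chi_1\cup\chi_2=\chi_2\cup\chi_3=0$ in $\HH^2(X,\mathbb{Z}/\ell)$; restricting along $\pi_1(\bar X)\hookrightarrow\pi_1(X)$ yields the analogous vanishings on $\bar X$, so $\langle\bar\chi_1,\bar\chi_2,\bar\chi_3\rangle$ is non-empty. By Remark~\ref{rem:tripleMassey}(c) it contains $0$ iff a single class $[\nu]$ vanishes in
$$\HH^2(\bar X,\mathbb{Z}/\ell)/(\HH^1(\bar X,\mathbb{Z}/\ell)\cup\bar\chi_3+\bar\chi_1\cup\HH^1(\bar X,\mathbb{Z}/\ell)).$$
Since $\bar X\not\cong\mathbb P^1_{\bar F}$ has positive genus, Poincar\'e duality on the curve provides $\HH^2(\bar X,\mathbb{Z}/\ell)\cong\mathbb{Z}/\ell$ with a non-degenerate (and, since $\ell>2$, alternating) cup-product pairing on $\HH^1$; if either $\bar\chi_1\ne 0$ or $\bar\chi_3\ne 0$ the subgroup in the denominator is already all of $\HH^2$, whereas if both vanish the defining system $\kappa_{1,2}=\kappa_{2,3}=0$ makes $\nu=0$ as a cocycle (since $\bar\chi_1\cup\bar\chi_2$ and $\bar\chi_2\cup\bar\chi_3$ already vanish as cocycles). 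Either way $0\in\langle\bar\chi_1,\bar\chi_2,\bar\chi_3\rangle$.

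For part (ii) I argue by contrapositive, exhibiting a lift $\rho:\pi_1(X)\to U_4(\mathbb{Z}/\ell)$ of the given $(\chi_1,\chi_2,\chi_3)$ whenever the hypotheses hold and (ii) fails. The assumption that $\mathrm{Pic}(\bar X)[\ell]$ is defined over $F$ makes $G_F$ act trivially on $\pi_1(\bar X)^{\mathrm{ab}}\otimes\mathbb{Z}/\ell$ and, via the Weil pairing, forces $\mu_\ell\subset F^\times$, so $G_F$ also acts trivially on $\HH^i(\bar X,\mathbb{Z}/\ell)$ for $i\le 2$. Combined with the section $\mathfrak p_*$ of Notation~\ref{not:rationalpoint}, this degenerates the Hochschild--Serre spectral sequence for $1\to\pi_1(\bar X)\to\pi_1(X)\to G_F\to 1$ and produces the K\"unneth-type direct sum
$$\HH^n(\pi_1(X),\mathbb{Z}/\ell)\;\cong\;\bigoplus_{p+q=n}\HH^p(G_F,\mathbb{Z}/\ell)\otimes_{\mathbb{Z}/\ell}\HH^q(\pi_1(\bar X),\mathbb{Z}/\ell)\qquad(n\le 2).$$
Writing $\chi_i=\bar\chi_i+\pi^*\psi_i$ for unique $\psi_i\in\HH^1(G_F,\mathbb{Z}/\ell)$ and projecting each $\chi_i\cup\chi_{i+1}=0$ to the $E_2^{1,1}$-summand yields, via graded commutativity and $\ell>2$, the identities
$$\bar\chi_i\otimes\psi_{i+1}=\bar\chi_{i+1}\otimes\psi_i\qquad\text{in}\quad\HH^1(\bar X,\mathbb{Z}/\ell)\otimes_{\mathbb{Z}/\ell}\HH^1(G_F,\mathbb{Z}/\ell),\qquad i=1,2.$$

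If some $\bar\chi_i=0$, these tensor identities force either some $\chi_j$ to vanish on all of $\pi_1(X)$ (in which case the zero defining system already gives $0\in\langle\chi_1,\chi_2,\chi_3\rangle$) or $\bar\chi_1=\bar\chi_2=\bar\chi_3=0$; in the latter scenario every $\chi_i=\pi^*\psi_i$, non-emptiness of $\langle\chi_1,\chi_2,\chi_3\rangle$ forces $\psi_i\cup\psi_{i+1}=0$ on $G_F$, and the Min{\'a}\v{c}--T{\^a}n theorem cited in the introduction (all non-empty triple Massey products with $\mathbb{Z}/\ell$-coefficients on an absolute Galois group contain $0$ when $\ell>2$) supplies a lift $\rho_G:G_F\to U_4(\mathbb{Z}/\ell)$, whose pullback along $\pi$ is the required $\rho$. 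If instead all $\bar\chi_i\ne 0$ but both $\{\bar\chi_1,\bar\chi_2\}$ and $\{\bar\chi_2,\bar\chi_3\}$ are linearly dependent, write $\bar\chi_2=a\bar\chi_1$ and $\bar\chi_3=c\bar\chi_2$ with $a,c\in(\mathbb{Z}/\ell)^\times$; the tensor identities then force $\psi_2=a\psi_1$ and $\psi_3=ac\psi_1$, so $\chi_2=a\chi_1$ and $\chi_3=ac\chi_1$ as characters of $\pi_1(X)$.

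In this final ``proportional'' case I construct $\rho$ explicitly. Let $N\in M_4(\mathbb{Z}/\ell)$ be the strictly upper triangular nilpotent matrix whose superdiagonal is $(1,a,ac)$ and whose other entries vanish. Because $\ell>3$, the denominators $2$ and $6$ in
$$\exp(tN)\;=\;I+tN+\tfrac{t^2}{2}N^2+\tfrac{t^3}{6}N^3$$
are invertible, so $t\mapsto\exp(tN)$ is a one-parameter subgroup of $U_4(\mathbb{Z}/\ell)$; setting $\rho(\gamma):=\exp(\chi_1(\gamma)N)$ therefore defines a homomorphism $\pi_1(X)\to U_4(\mathbb{Z}/\ell)$ whose superdiagonal reads $(\chi_1(\gamma),a\chi_1(\gamma),ac\chi_1(\gamma))=(\chi_1(\gamma),\chi_2(\gamma),\chi_3(\gamma))$, as required. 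The main obstacle is the subcase $\bar\chi_1=\bar\chi_2=\bar\chi_3=0$, which admits no intrinsic resolution on $X$ and necessarily invokes the external Min{\'a}\v{c}--T{\^a}n vanishing theorem.
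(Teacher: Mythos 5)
Your argument is correct in substance and follows the same logical skeleton as the paper's proof, but where the paper simply cites results from \cite{BleherChinburgGillibert2023} (Prop.~3.1 for part (i), Prop.~4.1(a),(b) and Lemma~2.6 for part (ii)), you reconstruct the content of those citations from scratch. For (i), your observation that Poincar\'e duality makes the indeterminacy subgroup $\HH^1(\bar X,\mathbb{Z}/\ell)\cup\bar\chi_3+\bar\chi_1\cup\HH^1(\bar X,\mathbb{Z}/\ell)$ equal to all of $\HH^2(\bar X,\mathbb{Z}/\ell)$ unless $\bar\chi_1=\bar\chi_3=0$ (in which case the zero defining system works) is exactly the right mechanism. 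For (ii), your reduction to the proportional case matches the paper's, and your two replacements for the citations are genuine improvements in self-containedness: the explicit one-parameter subgroup $\gamma\mapsto\exp(\chi_1(\gamma)N)$ cleanly shows that proportional characters always admit a lift to $U_4(\mathbb{Z}/\ell)$ when $\ell>3$ (this is the content of the cited Prop.~4.1(b)), and your appeal to Min\'a\v{c}--T\^an in the subcase where all three characters are inflated from $G_F$ is legitimate, using that $\mathfrak p^*\circ\pi^*=\mathrm{id}$ makes $\pi^*$ injective on $\HH^2$ so that $\psi_1\cup\psi_2=\psi_2\cup\psi_3=0$ already holds on $G_F$. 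Note that your whole argument for (ii) relies on the section $\mathfrak p_*$ of Notation~\ref{not:rationalpoint}, i.e.\ on $X(F)\neq\emptyset$; this is part of the standing setup there, but you should say so.

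The one step you should tighten is the spectral-sequence claim. The existence of a section does not by itself force the Lyndon--Hochschild--Serre spectral sequence to degenerate in total degree $2$ (the differentials out of $E_2^{1,1}$ and $E_2^{0,2}$ are not obviously zero), so the displayed K\"unneth direct sum is unjustified. More seriously, the asserted identity $\bar\chi_i\otimes\psi_{i+1}=\bar\chi_{i+1}\otimes\psi_i$ is not correct in general: writing $\chi_i=\tilde{\bar\chi}_i+\pi^*\psi_i$ with $\tilde{\bar\chi}_i$ the zero-extension, the class $\tilde{\bar\chi}_i\cup\tilde{\bar\chi}_{i+1}$ lies in $F^1\HH^2$ (since $\bar\chi_i\cup\bar\chi_{i+1}=0$ on $\bar X$) and contributes an undetermined term to the symbol in $E_\infty^{1,1}$. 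What saves you is that in every instance where you invoke the identity, this extra term vanishes on the nose: either one of $\tilde{\bar\chi}_i,\tilde{\bar\chi}_{i+1}$ is zero, or they are proportional and $\alpha\cup\alpha=0$ for $\ell$ odd. The deduction you actually need is only the standard fact that for $\psi\in\HH^1(G_F,\mathbb{Z}/\ell)$ and $\chi\in\HH^1(X,\mathbb{Z}/\ell)$ with $\bar\chi\neq0$, the vanishing of $\pi^*\psi\cup\chi$ forces $\psi\otimes\bar\chi=0$ in $E_2^{1,1}$ and hence $\psi=0$; this uses only the multiplicativity of the filtration and the inclusion $E_\infty^{1,1}\subseteq E_2^{1,1}$, not degeneration, and it is precisely the content of the Lemma~2.6 that the paper cites. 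With that step restated correctly, your proof goes through.
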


\begin{proof}
Part (i) follows from \cite[Prop. 3.1]{BleherChinburgGillibert2023}.

To prove part (ii), suppose that $\mathrm{Pic}(\bar{X})[\ell]$ is defined over $F$ and $\langle \chi_1,\chi_2,\chi_3\rangle$ does not contain zero. By \cite[Prop. 4.1(a)]{BleherChinburgGillibert2023}, none of the $\bar{\chi}_i$ are zero. Suppose each of $\{\bar{\chi}_1,\bar{\chi}_2\}$ and $\{\bar{\chi}_2,\bar{\chi}_3\}$ are $(\mathbb{Z}/\ell)$-linearly dependent.  This means there exist $a_1,a_2 \in (\mathbb{Z}/\ell)^\times$ with $\bar{\chi}_1=a_1\bar{\chi}_2$ and $\bar{\chi}_2=a_2\bar{\chi}_3$. Hence there exist $\psi_1,\psi_2\in \HH^1(F,\mathbb{Z}/\ell) = \mathrm{Hom}(G_F,\mathbb{Z}/\ell)$ such that
$$\chi_1 = a_1 \chi_2 + \psi_1 \quad\mbox{and}\quad \chi_2 = a_2 \chi_3 + \psi_2.$$
Since $\chi_1\cup \chi_2 = 0$, and $\chi_2 \cup \chi_2 = 0$ because $\ell> 3$, this implies $\psi_1\cup\chi_2=0$. But since $\bar{\chi}_2\ne 0$ and $\bar{\psi}_1=0$, this is by \cite[Lemma 2.6]{BleherChinburgGillibert2023} only possible if $\psi_1=0$. Similarly,  $\psi_2=0$. Thus there exist $a_1,a_2 \in (\mathbb{Z}/\ell)^\times$ with $\chi_1=a_1\chi_2$ and $\chi_2=a_2\chi_3$. But this implies by \cite[Prop. 4.1(b)]{BleherChinburgGillibert2023} that $\ell \le 3$, which contradicts our assumption that $\ell > 3$. Therefore, $\{\bar{\chi}_1,\bar{\chi}_2\}$ or $\{\bar{\chi}_2,\bar{\chi}_3\}$ must be $(\mathbb{Z}/\ell)$-linearly independent, which finishes the proof of part (ii).
\end{proof}


\section{Mumford's construction and Johnson's homomorphisms}
\label{s:topology}

\subsection{Mumford's construction of hyperelliptic curves}
\label{ss:hyperell}

We begin by recalling some constructions of Mumford in \cite[p. 3.124 - 3.134]{Mumford}
concerning moduli spaces of hyperelliptic curves of genus $g \ge 1$.

 Let $\mathcal{H}_g^{(2)}$ the open subset of $(\mathbb{C} - \{0,1\})^{2g -1}$ consisting of all points $(a_2,\ldots,a_{2g})$ such that the $a_i$ are distinct.  Thus $\mathcal{H}_g^{(2)}$ is the set of ordered subsets of size $2g - 1$ in $\mathbb{C}  - \{0,1\}$.  Over $\mathcal{H}_g^{(2)}$ we have a family $\mathcal{C}$ of smooth projective hyperelliptic curves such that the fiber  $\mathcal{C}_a$ of $\mathcal{C}$ over $a = (a_2,\ldots,a_{2g})$ has affine equation
$$y^2 = x \cdot (x - 1) \cdot (x - a_2) \cdots (x - a_{2g}).$$
Note that there will be a unique point over $x = \infty$ on this curve, and we must choose a smooth model of the family $\mathcal{C}$ over $x = \infty$.
The projection $(x,y)  \to x$ makes $\mathcal{C}_a$ a double cover of $\mathbb{P}^1(\mathbb{C})$ that is branched at $\{0,1,a_2,\ldots,a_{2g},\infty\}$.  We define $\infty(a)$ to be the unique point of $\mathcal{C}_a$ over $\infty \in \mathbb{P}^1(\mathbb{C})$.

Fix the base point $b = (2,3,\ldots,2g) \in \mathcal{H}_g^{(2)}$.  Mumford constructs the universal cover $\hat{\mathcal{H}}_g$ of $\mathcal{H}_g^{(2)}$ as the space of homotopy classes of continuous maps 
$$\phi:[0,1] \to (\mathbb{C} - \{0,1\})^{2g -1} - \cup_{i < j} \Delta_{i,j}$$
such that $\phi(0) = b$, where a homotopy between two such maps must fix the starting and ending points.
The map
$$\hat{\mathcal{H}}_g \to \mathcal{H}_g^{(2)}$$
defined by $\phi \to \phi(1)$ makes $\hat{\mathcal{H}}_g$ a covering space of $\mathcal{H}_g^{(2)}$ with covering group $G = \pi_1^{\mathrm{top}}(\mathcal{H}_g^{(2)},b)$.  

We identify $G$ with the so-called normalized pure braid group on $2g+2$ strands in $\mathbb{P}^1(\mathbb{C})$ based at $B_0 = (0,1,2,3,\ldots,2g,\infty)$.  Here braids that start at $B_0$ are pure if each strand begins and ends at the same point. A pure braid is normalized if the strands at $0$, $1$ and $\infty$ are constant, and if no strand wraps around the strand at $\infty$.  We can thus also view $G$ as the normalized pure braid group associated to  $2g+1$ strands in $\mathbb{C}$;  see \cite[p. 3.126]{Mumford}.

 Let $\mathcal{G}$ be the group of orientation preserving homeomorphisms $\phi:\mathbb{P}^1(\mathbb{C}) \to \mathbb{P}^1(\mathbb{C})$ such that $\phi(0) = 0$, $\phi(1) = 1$ and $\phi(\infty) = \infty$.  The topology of $\mathcal{G}$ is the compact open topology.  Define $K_g$ to be the subgroup of $\phi \in \mathcal{G}$ such that $\phi(i) = i$ for $i = 0,1, \ldots, 2g, \infty$.  The map $\pi:\mathcal{G} \to \mathcal{H}_g^{(2)}$ defined by 
$$\pi(\phi)  = (\phi(2),\ldots,\phi(2g))$$ then gives
a bijection 
\begin{equation}
\label{eq:bije}
\pi:\mathcal{G}/K_g \to \mathcal{H}_g^{(2)}.
\end{equation}

Mumford shows in \cite[Lemma 8.11]{Mumford} that near every point $a \in \mathcal{H}_g^{(2)}$ there is a local section of the projection $\pi:\mathcal{G} \to \mathcal{H}_g^{(2)}$.  These local sections amount to making a continuous choice for $a'$  in a neighborhood of $a \in \mathcal{H}_g^{(2)}$ of a homeomorphism 
$\phi_{a'} \in \mathcal{G}$ such that $\pi(\phi_{a'}) = a'$.  Thus $\pi$ has the homotopy lifting property.  In particular, we can lift paths $\gamma:[0,1] \to \mathcal{H}_g^{(2)}$ such that $\gamma(0) = b = (2,3,\ldots,2g)$ to paths $\phi:[0,1] \to \mathcal{G}$ such that $\phi(0)$ is the identity element of $\mathcal{G}$.  We will view such lifts in the following way.  As $r$ varies from $0$ to $1$, $\gamma(r) \in \mathcal{H}_g^{(2)}$ traces out a continuous path of locations for the branch points in $\mathbb{P}^1(\mathbb{C})$ for hyperelliptic curves $\mathcal{C}_{\gamma(r)}$ in the family $\mathcal{C} \to \mathcal{H}_g^{(2)}$.  The path $\phi:[0,1] \to \mathcal{G}$ then produces a way of dragging $\mathbb{P}^1(\mathbb{C})$ around over this path in such a way that the initial set of branch points $B_0 = (0,1,2,\ldots,2g,\infty)$ associated to $b =  \gamma(0) = \pi(\phi(0)) = (2,3,\ldots,2g)$ are carried to $(0,1,a_2,\ldots,a_{2g},\infty)$ when $\pi(\phi(1)) = (a_2,\ldots,a_{2g}) = \gamma(1)$.  Since $\phi(0)$ is by assumption the identity element of $\mathcal{G}$, we can lift $\phi(0)$ to the identity automorphism $\tilde{\phi}(0)$ of the
fiber $\mathcal{C}_b$ of the family $\mathcal{C} \to \mathcal{H}_g^{(2)}$.  There is then a unique way to lift $\phi(r) \in \mathcal{G}$ for $0 < r \le 1$ to a homeomorphism $\tilde{\phi}(r):\mathcal{C}_{b} \to \mathcal{C}_{\pi(\phi(r))} = \mathcal{C}_{\gamma(r)}$ such that the function $r \mapsto \tilde{\phi}(r)$ is continuous over $0 \le r \le 1$.  Thus $\phi:[0,1] \to \mathcal{G}$ also produces a continuous way to drag around the fibers 
$\mathcal{C}_{\gamma(r)}$ of the restriction of the family $\mathcal{C} \to \mathcal{H}_g^{(2)}$ over the
path $\gamma:[0,1] \to \mathcal{H}_g^{(2)}$.

Since local sections of $\mathcal{G} \to \mathcal{H}_g^{(2)}$ are not unique, there are many choices of how to lift a path $\gamma:[0,1] \to \mathcal{H}_g^{(2)}$ with $\gamma(0) = b$ to a path $\phi:[0,1] \to \mathcal{G}$ with $\phi(0)$ equal to the identity in $\mathcal{G}$.   Let $[\gamma]$ be the element of the universal cover $\hat{\mathcal{H}}_g$ that is determined by the homotopy class of $\gamma$.  We lift $\gamma$ as above with $\phi(0) \in \mathcal{G}$ the identity element of $\mathcal{G}$.  Mumford shows in 
\cite[p. 3.128 - 3.129]{Mumford} that because of the homotopy lifting property of $\pi:\mathcal{G} \to \mathcal{H}_g^{(2)}$, $[\gamma]$ determines $\phi(1) \in \mathcal{G}$  up to right multiplication by an element of the subgroup $K_g^0$ of $K_g$ that is the path connected component of the identity of 
$K_g$.   Here  $\beta \in K_g$ lies in $K_g^0$ if there is a continuous $\tau:[0,1] \to K_g$ with $\tau(0) = $ identity and $\tau(1) = \beta$. 
This gives a map
\begin{equation}
\label{eq:sigmamap} \sigma: \hat{\mathcal{H}}_g \to \mathcal{G}/K_g^0
\end{equation}
defined by $[\gamma] \mapsto \phi(1)K_g^0$.

In more colloquial terms, the homotopy class $[\gamma]$ determines the final homeomorphism
$\phi(1) :\mathbb{P}^1(\mathbb{C}) \to \mathbb{P}^1(\mathbb{C})$ up to precomposition by $\tau(1)$ for
a continuous family of homeomorphisms $\tau:[0,1] \to K_g$ such that $\tau(0)$ is the identity map.  Here by the definition of $K_g$, each
$\tau(r):\mathbb{P}^1(\mathbb{C}) \to \mathbb{P}^1(\mathbb{C})$ fixes $0$, $1$, $\infty$, and each coordinate of the base point
$b = (2,\ldots,2g) \in \mathcal{H}_g^{(2)}$.  As above, we lift the identity map $\tau(0)$ to the identity map $\tilde{\tau}(0)$ of the fiber  $\mathcal{C}_b$.  Then for  $r \in [0,1]$, there  will be a unique lift of $\tau(r) \in K_g$ to an automorphism $\tilde{\tau}(r)$ of  $\mathcal{C}_b$ such that $r \mapsto \tilde{\tau}(r)$ is continuous.
Thus $[\gamma]$ determines the homeomorphism
$\tilde{\phi}(1):\mathcal{C}_{b} \to \mathcal{C}_{\pi(\phi(1))} = \mathcal{C}_{\gamma(1)}$ 
of hyperelliptic curves up to precomposition with a homeomorphism of $\mathcal{C}_{b}$ that can be connected to the identity homeomorphism by a family of homeomorphisms of $\mathcal{C}_{b}$.  

Suppose now that $\gamma: [0,1] \to \mathcal{H}_g^{(2)}$ is actually a loop based at $b$, so that
$[\gamma]$ is an element of $\pi_1^{\mathrm{top}}(\mathcal{H}_g^{(2)},b)$.  Then $\phi(1)$ (resp. $\tilde{\phi}(1)$) is a homeomorphism of
$\mathbb{P}^1(\mathbb{C}) - \{0,1,2,\ldots,2g,\infty\}$ (resp. $\mathcal{C}_{b}$) that is determined up to isotopy.  Furthermore, $\tilde{\phi}(1)$ sends the point $\infty(b) \in \mathcal{C}_b$ to itself, since $\infty(b)$ is the unique point of $\mathcal{C}_b$ over $\infty \in \mathbb{P}^1(\mathbb{C})$ and $\phi(1) \in \mathcal{G}$ fixes $\infty$.  Let $\mathrm{Mod}\left(\mathbb{P}^1(\mathbb{C}) - \{0,1,2,\ldots,2g,\infty\}\right)$ be the mapping class group of $\mathbb{P}^1(\mathbb{C}) - \{0,1,2,\ldots,2g,\infty\}$.  Let 
$\mathrm{Mod}\left(\mathcal{C}_{b}, \infty(b)\right)$ be the mapping class group of $\mathcal{C}_b$ with marked point $\infty(b)$, i.e. the group of homeomorphisms of  $\mathcal{C}_b$ that fix $\infty(b)$ up to isotopy.  We conclude that $\phi(1)$ (resp. $\tilde{\phi}(1)$) defines an element $z([\gamma])$ of $\mathrm{Mod}\left(\mathbb{P}^1(\mathbb{C}) - \{0,1,2,\ldots,2g,\infty\}\right)$ (resp. $\tilde{z}([\gamma])$ of $\mathrm{Mod}\left(\mathcal{C}_{b}, \infty(b)\right)$).  We obtain group homomorphisms
\begin{eqnarray}
\label{eq:mappingclassB}
z:\quad \pi_1^{\mathrm{top}}(\mathcal{H}_g^{(2)},b)&\to&\mathrm{Mod}\left(\mathbb{P}^1(\mathbb{C}) - \{0,1,2,\ldots,2g,\infty\}\right),\quad\mbox{and}\\
\label{eq:mappingclassC}
\tilde{z}:\quad \pi_1^{\mathrm{top}}(\mathcal{H}_g^{(2)},b)&\to&\mathrm{Mod}\left(\mathcal{C}_{b},\infty(b)\right).
\end{eqnarray}
With more work, one could refine these homomorphisms to have images in the mapping class groups associated to homeomorphisms of $\mathbb{P}^1(\mathbb{C}) - \{0,1,2,\ldots,2g,\infty\}$ (resp. $\mathcal{C}_b$) that fix all points in a small closed disc of positive radius around $\infty \in \mathbb{P}^1(\mathbb{C})$ (resp. around $\infty(b)$  on $\mathcal{C}_b$).  However, we will not need such a refinement, since these more refined mapping class groups map naturally to the ones above, and for our purposes constructing elements of $\mathrm{Mod}\left(\mathcal{C}_{b},\infty(b)\right)$ will suffice. 

Restricting the map $\sigma$ in (\ref{eq:sigmamap}) to the homotopy classes of loops $\gamma:[0,1] \to \mathcal{H}_g^{(2)} $ based at $b$ gives a homomorphism
 \begin{equation}
 \label{eq:sigmalittle}
 \sigma_*:G = \pi_1^{\mathrm{top}}(\mathcal{H}_g^{(2)}, b) \to K_g/K_g^0.
 \end{equation}
 Here $\pi_1^{\mathrm{top}}(\mathcal{H}_g^{(2)}, b) $ is the covering group of 
 $\hat{\mathcal{H}}_g \to \mathcal{H}_g^{(2)} = \mathcal{G}/K_g$
 and the natural map $\mathcal{G}/K_g \to \mathcal{G}/K_g^0$ has covering group $K_g/K_g^0$ acting on the right.
 Mumford shows that the map $\sigma: \hat{\mathcal{H}}_g \to \mathcal{G}/K_g^0$ in (\ref{eq:sigmamap})
 is equivariant for the homomorphism $\sigma_*$ in (\ref{eq:sigmalittle}).  This gives a commutative square
 $$ \xymatrix @C.3pc {
\hat{\mathcal{H}}_g\ar[d]&\to&\mathcal{G}/K_g^0 \ar[d]\\
\mathcal{H}_g^{(2)}&\to& \mathcal{G}/K_g
}$$
in which the action of $G$ on the upper left corner over the bottom left corner is consistent
with the action of $K_g/K_g^0$ on the upper right corner over the bottom right corner via 
$\sigma_*:G = \pi_1^{\mathrm{top}}(\mathcal{H}_g^{(2)}, b) \to K_g/K_g^0$.  
 
Suppose now that $S$ is a simple closed curve on $\mathbb{P}^1(\mathbb{C}) - \{0,1,2,\ldots,2g,\infty\}$.  One can form a Dehn twist $\delta(S) \in \mathcal{G}$  in the following way.  Choose a collar $U$ around $S$ in $\mathbb{P}^1(\mathbb{C}) - \{0,1,2,\ldots,2g,\infty\}$, so that $U$ is homeomorphic to an annulus with two boundary components that are circles.  Considering $S$ as a parameterized curve, hold the left boundary of $U$ fixed and twist the right boundary of $U$ through one full rotation while extending this twist continuously to the interior of $U$.  This produces a homeomorphism $\delta(S)\in \mathcal{G}$ that is the identity outside of $U$.  

Mumford gives the following recipe for producing a path $\phi:[0,1] \to \mathcal{G}$ such that $\phi(0)$ is the identity of $\mathcal{G}$ and $\phi(1) = \delta(S)$.  Let $x$ be a point of $S$.  Since $S$ is contractible in $\mathbb{P}^1(\mathbb{C})  - \{\infty\}$, we can find a homotopy $\Phi:[0,1] \times [0,1] \to \mathbb{P}^1(\mathbb{C}) - \{\infty\}$ such that $\Phi(0,t) = x$ for $t \in [0,1]$, $\Phi(1,t)$ traces the curve $S$ as $t$ ranges from $0$ to $1$ with $\Phi(s,0) = x = \Phi(s,1)$ for all $s \in [0,1]$.  As $s$ varies from $1$ down to $0$, we view $\Phi(s \times [0,1])$ as a loop $S_s$ in $\mathbb{P}^1(\mathbb{C})$ that is based at $x$ with $S_1 = S$ and $S_s$ shrinking to the constant loop  $S_0$ at $x$ as $s \to 0$.   We can further arrange that for all $s \in [0,1]$, there exists at most one element of $\{0,1,\ldots,2g\}$ that lies on  $S_s$.   

Mumford now begins with the Dehn twist $\delta(S)$ associated to  $S_1 = S$ and deforms the annulus $U = U_1 $ around $S_1$ to produce a varying family of annuli $U_s$ around $S_s$ with the property that at most one point of 
$\{0,1,2,\ldots,2g\}$  is in any $U_s$.  We now perform continuously varying Dehn twists on the 
$U_s$ as $s$ decreases from $1$ to $0$.  For intervals of $s$ such that $U_s$ contains no point of $\{0,1,2,\ldots, 2g\}$ this moves none of the points of $\{0,1,2,\ldots,2g,\infty\}$, so we get a continuously varying family $\phi_s$ of elements of $K_g \subset \mathcal{G}$ parameterized by $s$ in such intervals.  When one edge of $U_s$ touches a point $i \in \{2,\ldots,2g\}$, the point $i$ starts to be moved by the Dehn twists associated to decreasing $s$.  The corresponding $\phi_s \in \mathcal{G}$ now have $\phi_s(i) \ne i$, but
$\phi_s(0) = 0$, $\phi_s(1) = 1$ and $\phi_s(\infty) = \infty$.  Eventually $s$ decreases enough to where $\phi_s(i)$ now touches the other boundary of $U_s$ and $\phi_s(i)$ is brought back to $\phi_s(i) = i$.
A physical analogy is that the moving family of Dehn twists is a cyclone moving across the landscape, with a cow at position $i$ being picked up, moved around and then dropped back down to position $i$ after going once around the cyclone.  The situation when $U_s$ touches $i = 0$ or $i = 1$ is  more subtle, since we want to produce elements of $\mathcal{G}$, and these fix $0$, $1$ and $\infty$.  To deal with $i = 0$ being moved, for example, one applies fractional linear transformations of $\mathbb{P}^1(\mathbb{C})$ which bring where $i = 0$ would be moved by the Dehn twists back to  $i = 0$ while fixing $1$ and $\infty$.   Note that these fractional linear transformations will then produce $\phi_s \in \mathcal{G}$ that move points outside of $U_s$.  The  procedure when $i  = 1$ would be moved by a Dehn twist is similar.  Mumford refers to this as ``putting $0$, $1$ and $\infty$ back by a unique projectivity."  A more prosaic description is that the cows located at $i = 0$ and $i = 1$ are particularly stubborn and do not want to move.  So they force the rest of the world to do the moving via fractional linear transformations when the Dehn cyclone reaches them.

We summarize the conclusions of this construction in the following lemma:
 
 \begin{lemma}
 \label{lem:nicepic}  
 The Dehn twist $\delta(S) \in \mathcal{G}$ of a simple closed curve $S$ on
 $Y= \mathbb{P}^1(\mathbb{C}) - \{0,1,2,\ldots,2g,\infty\}$ arises as $\phi(1)$ for a path $\phi:[0,1] \to \mathcal{G}$ with $\phi(0)$ the identity of $\mathcal{G}$.
 The image of $\phi$ under the projection $\pi:\mathcal{G} \to \mathcal{H}_g^{(2)}$ is a loop $\alpha:[0,1] \to \mathcal{H}_g^{(2)}$ based at $b = \alpha(0) = \alpha(1) = (2,3,\ldots,2g)$.  
 \begin{itemize}
 \item[(a)] The element $z([\alpha])$ of the mapping class group of $Y$ that is associated to the class $[\alpha] \in \pi_1^{\mathrm{top}}(\mathcal{H}_g^{(2)},b)$ is represented by $\delta(S) = \phi(1)$.  Thus the Dehn twist $\delta(S)$ is induced by the normalized pure braid associated to $[\alpha]$. 
\item[(b)]
 For $t \in [0,1]$, write $\alpha(t) = (\alpha(t)_2,\ldots,\alpha(t)_{2g}) = \pi(\phi(t))$. The element $\phi(t) \in \mathcal{G}$ defines  a homeomorphism between $Y = Y_b$ and $Y_{\alpha(t)} = \mathbb{P}^1(\mathbb{C}) - \{0,1,\alpha(t)_2,\ldots,\alpha(t)_{2g},\infty\}$ which varies continuously with $t$.  After choosing $\tilde{\phi}(0)$ to be the identity on the fiber $\mathcal{C}_b$, these homeomorphisms lift uniquely and continuously to homeomorphisms
 $\tilde{\phi}(t)$ between the fiber $\mathcal{C}_b$ of the family of hyperelliptic curves $\mathcal{C} \to \mathcal{H}_g^{(2)}$ and the fiber $\mathcal{C}_{\alpha(t)}$.  Furthermore $\tilde{\phi}(t)$ carries $\infty(b)$ to $\infty(\alpha(t))$.  The canonical lift $\tilde{\delta}(S)$ of the Dehn twist to $\mathcal{C}_b$ is represented by $\tilde{\phi}(1)$.
 \end{itemize}
 \end{lemma}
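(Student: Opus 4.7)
The plan is to use Mumford's Dehn cyclone recipe, sketched in the paragraphs immediately preceding the lemma, to construct $\phi$ explicitly, and then to read off both (a) and (b) from the definitions of $z$ and $\tilde{z}$ together with the path-lifting property of the family $\mathcal{C} \to \mathcal{H}_g^{(2)}$.

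First I would build the path $\phi: [0,1] \to \mathcal{G}$. Pick a base point $x \in S$ and a null-homotopy of $S$ in $\mathbb{P}^1(\mathbb{C}) - \{\infty\}$ through loops $S_s$ based at $x$, with $S_1 = S$ and $S_0 = \{x\}$, arranged so that at each moment $s$ at most one point of $\{0,1,\ldots,2g\}$ lies on $S_s$. Choose a continuous family of annular collars $U_s$ around $S_s$ and let $\psi_s$ be the Dehn twist supported in $U_s$, corrected by the unique fractional linear transformation fixing $1$ and $\infty$ (resp.\ $0$ and $\infty$) whenever the cyclone $U_s$ would otherwise move $0$ (resp.\ $1$). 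Setting $\phi(t) := \psi_t$ gives a continuous path in $\mathcal{G}$ with $\phi(0) = \mathrm{id}_{\mathcal{G}}$ and $\phi(1) = \delta(S)$. Since $\delta(S)$ is the identity outside $U_1 \subset Y$, it fixes every point of $\{0,1,\ldots,2g,\infty\}$ and hence lies in $K_g$, so $\pi(\phi(1)) = b = \pi(\phi(0))$ and $\alpha := \pi \circ \phi$ is a loop in $\mathcal{H}_g^{(2)}$ based at $b$.

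Part (a) is then essentially a tautology: by the definition of $z$ in (\ref{eq:mappingclassB}), $z([\alpha])$ is the isotopy class of the endpoint $\phi(1)$ of any lift of $\alpha$ to $\mathcal{G}$ starting at the identity of $\mathcal{G}$, and the ambiguity $K_g^0$ in such a lift acts trivially on isotopy classes. Our lift ends at $\delta(S)$, so $z([\alpha])$ is represented by $\delta(S)$. For (b), the pullback of $\mathcal{C}\to\mathcal{H}_g^{(2)}$ along $\alpha$ is a topological fiber bundle over $[0,1]$ whose restriction away from the varying branch locus is an ordinary double cover of an open subset of $\mathbb{P}^1(\mathbb{C})\times[0,1]$. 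The continuous family $\phi(t): Y_b \to Y_{\alpha(t)}$ then lifts uniquely to a continuous family $\tilde\phi(t): \mathcal{C}_b \to \mathcal{C}_{\alpha(t)}$ with $\tilde\phi(0) = \mathrm{id}$, by path-lifting through this double cover; the values over the branch points are forced by continuity. Because $\phi(t)$ fixes $\infty$ and $\infty(\alpha(t))$ is the unique preimage of $\infty$ on $\mathcal{C}_{\alpha(t)}$, we obtain $\tilde\phi(t)(\infty(b)) = \infty(\alpha(t))$; at $t = 1$ this exhibits $\tilde\phi(1)$ as the canonical lift $\tilde\delta(S)$ appearing in the definition of $\tilde{z}$ in (\ref{eq:mappingclassC}).

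The main technical obstacle is the cyclone construction itself, specifically the moments when the annulus $U_s$ sweeps over $0$ or $1$ and must be combined with a continuously varying fractional linear correction so that $\psi_s$ stays in $\mathcal{G}$; checking the existence, uniqueness, and continuity of those corrections is Mumford's argument, which I simply quote. Once $\phi$ is in hand, both (a) and (b) are essentially formal consequences of the definitions of $z$ and $\tilde{z}$ and of the path-lifting property for the (unramified locus of the) double cover underlying $\mathcal{C} \to \mathcal{H}_g^{(2)}$.
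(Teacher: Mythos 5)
Your proposal is correct and follows essentially the same route as the paper: the paper offers no separate proof of this lemma, presenting it as a summary of the immediately preceding discussion of Mumford's cyclone construction of $\phi$, the homotopy lifting property of $\pi:\mathcal{G}\to\mathcal{H}_g^{(2)}$, and the unique continuous lifting of $\phi(t)$ to homeomorphisms $\tilde{\phi}(t):\mathcal{C}_b\to\mathcal{C}_{\alpha(t)}$, which is exactly what you reproduce. Your added justification for the uniqueness of $\tilde{\phi}(t)$ via path lifting through the unramified part of the double cover is a reasonable filling-in of a step the paper merely asserts.
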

 
 By \cite[p. 249]{FarbMargalit2012}, the canonical lifts  $\tilde{\delta}(S)$ of the Dehn twists $\delta(S)$ described above  generate the image of the normalized pure braid group in the mapping class group $\mathrm{Mod}\left(\mathcal{C}_{b},\infty(b)\right)$ when $b  = (2,\ldots,2g)$.

 \begin{corollary} 
 \label{cor:duh}
The map from the normalized pure braid group $\pi_1^{\mathrm{top}}(\mathcal{H}_g^{(2)},b)$ to the mapping class group $\mathrm{Mod}\left(\mathcal{C}_{b},\infty(b)\right)$ can be described as follows.  Pick a loop $\alpha:[0,1] \to \mathcal{H}_g^{(2)}$ based at $b$. Pick a lift $\phi:[0,1] \to \mathcal{G}$ of $\alpha$ such that $\phi(0)$ is the identity.  Continuously lift $\phi(r)$ for $0 \le r \le 1$ to a homeomorphism $\tilde{\phi}(r):\mathcal{C}_b \to \mathcal{C}_{\alpha(r)}$ such that $\tilde{\phi}(0)$ is the identity map.  Then the action of the element $\tilde{z}([\alpha]) \in \mathrm{Mod}\left(\mathcal{C}_{b},\infty(b)\right)$ on $\pi_1^{\mathrm{top}}(\mathcal{C}_b,\infty(b))$  results from letting the homeomorphism  $\tilde{\phi}(1)$ act on loops in $\mathcal{C}_b$ that are based at $\infty(b)$.
 \end{corollary}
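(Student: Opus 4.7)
The plan is to observe that this corollary is essentially a tautological unpacking of the definition of $\tilde{z}$ given in equation (\ref{eq:mappingclassC}), combined with the standard fact that a mapping class (relative to a marked fixed point) acts on the fundamental group at that basepoint by pushforward of any representative homeomorphism. So I do not expect any substantive new argument; the work is in threading together the pieces Mumford's construction has already put in place.

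First, I would recall the construction of $\tilde{z}([\gamma])$ from the paragraph preceding Lemma \ref{lem:nicepic}. Given a loop $\gamma:[0,1]\to\mathcal{H}_g^{(2)}$ based at $b$, the homotopy lifting property of $\pi:\mathcal{G}\to\mathcal{H}_g^{(2)}$ (established via Mumford's local sections from \cite[Lemma 8.11]{Mumford}) produces a lift $\phi:[0,1]\to\mathcal{G}$ with $\phi(0)$ equal to the identity. This lift in turn has a canonical continuous lift $\tilde\phi(r):\mathcal{C}_b\to\mathcal{C}_{\gamma(r)}$ with $\tilde\phi(0)=\mathrm{id}_{\mathcal{C}_b}$. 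Because $\gamma$ is a loop, $\mathcal{C}_{\gamma(1)}=\mathcal{C}_b$; because $\phi(1)\in\mathcal{G}$ fixes $\infty$ in $\mathbb{P}^1(\mathbb{C})$, the homeomorphism $\tilde\phi(1)$ fixes the unique point $\infty(b)$ above $\infty$. By definition, $\tilde z([\gamma])$ is precisely the isotopy class of $\tilde\phi(1)$ in $\mathrm{Mod}\!\left(\mathcal{C}_b,\infty(b)\right)$.

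Next, specializing to $\gamma=\alpha$ yields the first part of the description. For the assertion about the action on $\pi_1^{\mathrm{top}}(\mathcal{C}_b,\infty(b))$, I would invoke the standard fact that an element of $\mathrm{Mod}(\mathcal{C}_b,\infty(b))$ acts on $\pi_1^{\mathrm{top}}(\mathcal{C}_b,\infty(b))$ by taking the pushforward of loops along any representative homeomorphism; isotopies relative to $\infty(b)$ induce the trivial automorphism of $\pi_1^{\mathrm{top}}$, so the action is well defined. Applying this to the representative $\tilde\phi(1)$ of $\tilde z([\alpha])$ gives precisely the assertion of the corollary.

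The only subtlety is independence of the choice of lift $\phi$. This was already settled in Mumford's discussion before Lemma \ref{lem:nicepic}: two lifts of $\alpha$ with the same initial value differ by right multiplication of the terminal value $\phi(1)$ by an element of $K_g^0$, and the canonical lift of such an element to $\mathcal{C}_b$ is isotopic to the identity through homeomorphisms fixing $\infty(b)$, hence induces the identity on $\pi_1^{\mathrm{top}}(\mathcal{C}_b,\infty(b))$. I do not anticipate any genuine obstacle; the proof is a direct consequence of how $\tilde z$ was set up, and it could even be recorded as a remark rather than a corollary.
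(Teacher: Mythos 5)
Your proposal is correct and follows the same route the paper intends: the corollary is stated without proof precisely because it is a direct unpacking of the construction of $\tilde{z}$ in (\ref{eq:mappingclassC}) together with Lemma \ref{lem:nicepic}, and your handling of well-definedness via the $K_g^0$-ambiguity and its lift being isotopic to the identity rel $\infty(b)$ matches Mumford's discussion that the paper relies on.
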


 We now show that the action of the normalized pure braid group $\pi_1^{\mathrm{top}}(\mathcal{H}_g^{(2)},b)$ on  $\pi_1^{\mathrm{top}}(\mathcal{C}_b,\infty(b))$ 
 described in Corollary \ref{cor:duh} can also be realized by a conjugation action inside the group 
 $\pi_1^{\mathrm{top}}(\mathcal{C},\infty(b))$.
  
 The projection $\mathcal{C} \to \mathcal{H}_g^{(2)}$ has a canonical section $\infty$ sending
 $t \in \mathcal{H}_g^{(2)}$ to the unique point $\infty(t)$ of the fiber $\mathcal{C}_t$ that lies over the point at infinity on $\mathbb{P}^1(\mathbb{C})$. This has two consequences:
 \begin{itemize}
 \item[(i)] The map $\mathcal{C} \to \mathcal{H}_g^{(2)}$ is a fibration.
 \item[(ii)]  The section $\infty$  induces a homomorphism $\infty_* : \pi_1^{\mathrm{top}}(\mathcal{H}_g^{(2)},b) \to \pi_1^{\mathrm{top}}(\mathcal{C},\infty(b))$ that splits the exact sequence
 \begin{equation}
 \label{eq:conj}
 1 \to \pi_1^{\mathrm{top}}(\mathcal{C}_b,\infty(b)) \to \pi_1^{\mathrm{top}}(\mathcal{C},\infty(b)) \to\pi_1^{\mathrm{top}}(\mathcal{H}_g^{(2)},b) \to 1.
 \end{equation}
 \end{itemize}
 Note that (\ref{eq:conj}) shows that there is an action of $\pi_1^{\mathrm{top}}(\mathcal{H}_g^{(2)},b)$
 on $\pi_1^{\mathrm{top}}(\mathcal{C}_b,\infty(b))$ via conjugation in $\pi_1^{\mathrm{top}}(\mathcal{C},\infty(b))$.  We now show this conjugation action coincides with the action of $\pi_1^{\mathrm{top}}(\mathcal{H}_g^{(2)},b)$ on $\pi_1^{\mathrm{top}}(\mathcal{C}_b,\infty(b))$ via the natural homomorphism $\tilde{z}: \pi_1^{\mathrm{top}}(\mathcal{H}_g^{(2)},b) \to \mathrm{Mod}\left(\mathcal{C}_b,\infty(b)\right)$.
 
\begin{lemma}
\label{lem:oyvey} 
Suppose $\alpha:[0,1] \to \mathcal{H}_g^{(2)}$ is a loop in $\mathcal{H}_g^{(2)}$ based at $b$. Let $\phi$ and $\tilde{\phi}$ be as in Corollary $\ref{cor:duh}$. The pure braid $[\alpha] \in \pi_1^{\mathrm{top}}(\mathcal{H}_g^{(2)},b)$
associated to $\alpha$ acts on the fiber $\mathcal{C}_{b}$ via the canonical lift $\tilde{\phi}(1)$ of $\phi(1) \in \mathcal{G}$.    Suppose $\gamma$ is a loop in the fiber $\mathcal{C}_b$ based at $\infty(b)$. Let $[\gamma]$ be
the class of $\gamma$ in $\pi_1^{\mathrm{top}}(\mathcal{C}_b,\infty(b))$.  Then the image $\tilde{\phi}(1)(\gamma)$
of $\gamma$ under the homeomorphism $\tilde{\phi}(1):\mathcal{C}_b \to \mathcal{C}_b$ is a loop
in $\mathcal{C}_b$ based at $\infty(b)$ that represents the class $\infty_*([\alpha]) \cdot [\gamma] \cdot \infty_*([\alpha])^{-1}$ in $\pi_1^{\mathrm{top}}(\mathcal{C}_b,\infty(b))$ which is the image of $[\gamma]$ under the
element $\tilde{z}([\alpha])\in\mathrm{Mod}\left(\mathcal{C}_b,\infty(b)\right)$ associated to $[\alpha]$.  
In other words, $\tilde{z}([\alpha])$ acts on $\pi_1^{\mathrm{top}}(\mathcal{C}_b,\infty(b))$ as an actual automorphism $($and not just an outer automorphism$)$, and this automorphism of $\pi_1^{\mathrm{top}}(\mathcal{C}_b,\infty(b))$ is given by conjugation by $\infty_*([\alpha])$.
 \end{lemma}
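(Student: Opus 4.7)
The plan is to construct an explicit homotopy in $\mathcal{C}$ that witnesses the conjugation relation, using the joint continuity of the lift $\tilde\phi$ guaranteed by Lemma \ref{lem:nicepic}(b) and Corollary \ref{cor:duh}. The whole proof is essentially a ``square lemma'' for a free homotopy of loops with moving basepoint, once one recognizes that the section $\infty$ supplies exactly the path along which the basepoint moves.

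First I would define the continuous map
$$H:[0,1]\times[0,1]\longrightarrow \mathcal{C}, \qquad H(s,t) := \tilde\phi(s)\bigl(\gamma(t)\bigr).$$
Joint continuity in $(s,t)$ uses that $r\mapsto\tilde\phi(r)$ is a continuous one-parameter family of fiberwise homeomorphisms $\mathcal{C}_b\to \mathcal{C}_{\alpha(r)}\subset \mathcal{C}$, which is precisely the content of the lifting statement in Lemma \ref{lem:nicepic}(b). I would then read off the four edges of this square: the bottom edge $H(0,t)=\gamma(t)$ (because $\tilde\phi(0)=\mathrm{id}$), the top edge $H(1,t)=\tilde\phi(1)\bigl(\gamma(t)\bigr)$ (a loop in $\mathcal{C}_{\alpha(1)}=\mathcal{C}_b$ based at $\infty(b)$ since $\tilde\phi(1)$ fixes $\infty(b)$), and both vertical edges $H(s,0)=H(s,1)=\tilde\phi(s)\bigl(\infty(b)\bigr)=\infty(\alpha(s))$. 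Thus both vertical edges trace the loop $\infty\circ\alpha$, which by definition represents $\infty_*([\alpha])$ in $\pi_1^{\mathrm{top}}(\mathcal{C},\infty(b))$.

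Next I would apply the standard square lemma. Since $H$ fills in the square in $\mathcal{C}$, its boundary $(\infty\circ\alpha)\cdot \tilde\phi(1)(\gamma)\cdot \overline{(\infty\circ\alpha)}\cdot \bar\gamma$ is null-homotopic rel basepoint $\infty(b)$. Passing to $\pi_1^{\mathrm{top}}(\mathcal{C},\infty(b))$, this produces the identity
$$\infty_*([\alpha])\cdot [\tilde\phi(1)(\gamma)] \cdot \infty_*([\alpha])^{-1} \;=\; [\gamma]$$
(up to a choice of path-concatenation convention, so that after the appropriate rearrangement one obtains the equality $[\tilde\phi(1)(\gamma)]=\infty_*([\alpha])\cdot[\gamma]\cdot\infty_*([\alpha])^{-1}$ as stated in the lemma).

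Finally I would explain why this really produces an automorphism of $\pi_1^{\mathrm{top}}(\mathcal{C}_b,\infty(b))$ rather than only an outer automorphism. Because $\infty_*$ splits the exact sequence (\ref{eq:conj}), the inclusion $\pi_1^{\mathrm{top}}(\mathcal{C}_b,\infty(b))\hookrightarrow \pi_1^{\mathrm{top}}(\mathcal{C},\infty(b))$ is injective, and inner conjugation inside $\pi_1^{\mathrm{top}}(\mathcal{C},\infty(b))$ by the lifted element $\infty_*([\alpha])$ preserves the normal subgroup $\pi_1^{\mathrm{top}}(\mathcal{C}_b,\infty(b))$. This yields a bona fide automorphism of the fiber group, which by Corollary \ref{cor:duh} coincides with $\tilde z([\alpha])$. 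The only real technical point is the joint continuity of $H$, i.e.\ the continuity of the family $\tilde\phi(s)$ as a function of both $s$ and the point of $\mathcal{C}_b$ — but this is already built into the construction of canonical lifts in the previous subsection, so the remaining argument is purely formal.
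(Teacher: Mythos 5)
Your proof is correct and is essentially the paper's own argument in a more standard packaging: the paper also uses the continuity of the family $\tilde{\phi}(s)$ to drag $\gamma$ through the fibers $\mathcal{C}_{\alpha(s)}$ and produce a homotopy in $\mathcal{C}$ between $\tilde{\phi}(1)(\gamma)$ and the conjugate of $\gamma$ by $\infty_*([\alpha])$, which is exactly the content of your square $H(s,t)=\tilde{\phi}(s)(\gamma(t))$. The joint-continuity point you flag is indeed supplied by Lemma \ref{lem:nicepic}(b), so no gap remains.
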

 
 \begin{proof}  Let $\alpha^*: [0,1] \to \mathcal{H}_g^{(2)}$ be the reverse of the loop $\alpha$ defined by $\alpha^*(t) = \alpha(1-t)$ for $ t \in [0,1]$.  It will suffice to show that the loop $\infty_*(\alpha) \circ \gamma \circ\infty_*(\alpha^*)$  based at $\infty(b)$ is homotopic to $\tilde{\phi}(1)(\gamma) \subset \mathcal{C}_b$ inside $\mathcal{C}$. There is a trivial homotopy in $\mathcal{H}_g^{(2)}$ between $\alpha \circ \alpha^*$ and the constant loop based at $b$ described in the following way.  As $t_0$ varies from $1$ down to $0$,
 take the path whose first half is the path described by $\alpha^*(t)$ with $t$ ranging from $0$ to $t_0$, and whose second half is the path described by $\alpha(t)$ with $t$ ranging from $1-t_0$ up to $1$.   When $t_0 = 1$, this is the composite loop $\alpha \circ \alpha^*$, and when $t_0 = 0$ this is the constant loop at $b$.  Now the homeomorphism $\tilde{\phi}(t):\mathcal{C}_{b} \to \mathcal{C}_{\alpha(t)}$  varies continuously with $t$.  As $t_0$ decreases from $1$ to $0$, we  drag the loop $\gamma \subset \mathcal{C}_{\alpha(0)} = \mathcal{C}_b$ back to a loop in $\mathcal{C}_{\alpha(t)}$ based at $\infty(\alpha(t))$ using the homeomorphism 
 $\tilde{\phi}(t)  : \mathcal{C}_{\alpha(0)} = \mathcal{C}_b \to \mathcal{C}_{\alpha(t)}$.  
 These homeomorphisms show that over the trivial homotopy above between $\alpha \circ \alpha^*$ and the constant loop in $\mathcal{H}_g^{(2)}$, there is a homotopy in $\mathcal{C}$ based at $\infty(b)$ between the path $\infty_*(\alpha) \circ \gamma \circ \infty_*(\alpha^*)$ and the loop 
 $\tilde{\phi}(1)(\gamma) \subset \mathcal{C}_b$. This completes the proof.
 \end{proof}

\subsection{Johnson's homomorphisms}
\label{ss:johnson}
We now assume $g > 1$.  
We will apply Morita's results in \cite{Morita1989} on Johnson's homomorphisms to  the split short exact sequence 
$$\xymatrix {
1 \ar[r] & \pi_1^{\mathrm{top}}(\mathcal{C}_b,\infty(b)) \ar[r]\ar@{=}[d] & \pi_1^{\mathrm{top}}(\mathcal{C},\infty(b))\ar[r] &\pi_1^{\mathrm{top}}(\mathcal{H}_g^{(2)},b) \ar@{=}[d]\ar[r] & 1.\\
&\Omega&&G}
$$
from (\ref{eq:conj}). As in Definition \ref{def:lowercentral}(c), let $G(3,\ell)$  be the subgroup of $G=\pi_1^{\mathrm{top}}(\mathcal{H}_g^{(2)},b)$ consisting of elements that act trivially on $\Omega/\Omega_{3,\ell}=\pi_1^{\mathrm{top}}(\mathcal{C}_b,\infty(b))/\pi_1^{\mathrm{top}}(\mathcal{C}_b,\infty(b))_{3,\ell}$. Moreover, let
\begin{equation}
\label{eq:tauforhyper}
\tau_{3,\ell}:G(3,\ell)\to \mathrm{Hom}(\Omega,\Omega_{3,\ell}/\Omega_{4,\ell})
\end{equation}
be the homomorphism from Remark \ref{rem:Johnsonhomgeneral}.

We write
\begin{equation}
\label{eq:pi1}
\Omega=\pi_1^{\mathrm{top}}(\mathcal{C}_b,\infty(b))=\langle x_1,y_1,x_2,y_2,\ldots,x_g,y_g\,:\; [x_1,y_1]\,[x_2,y_2]\,\cdots\,[x_g,y_g]=1\rangle.
\end{equation}

\begin{lemma}
\label{lem:lame1}
Using $(\ref{eq:pi1})$, let $\lambda: \Omega \to \Omega_{3,\ell}/\Omega_{4,\ell}$ be the homomorphism satisfying
\begin{equation}
\label{eq:moritawant!}
\left\{\begin{array}{rccl}
\lambda(x_i) &\equiv& 1\mod \Omega_{4,\ell}, & 1\le i\le g,\\
\lambda(y_1) &\equiv& [[x_1,x_2],x_2]^{-8} \mod\Omega_{4,\ell},\\
\lambda(y_2) &\equiv& [[x_1,x_2],x_1]^8 \mod\Omega_{4,\ell},\\
\lambda(y_i) &\equiv&1\mod \Omega_{4,\ell}, & 3\le i\le g.
\end{array}\right.
\end{equation}
Then there exists an element $\Phi\in G(3,\ell)$ with $\tau_{3,\ell}(\Phi)=\lambda$.
\end{lemma}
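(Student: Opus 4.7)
The plan is to construct $\Phi$ as the class in $G = \pi_1^{\mathrm{top}}(\mathcal{H}_g^{(2)},b)$ of a specific normalized pure braid whose image in the mapping class group $\mathrm{Mod}(\mathcal{C}_b,\infty(b))$ is a Dehn twist about a separating simple closed curve that is invariant under the hyperelliptic involution. Concretely, I would choose a separating simple closed curve $S \subset \mathcal{C}_b$ of genus $2$ (together with a genus $g - 2$ complement) that is hyperelliptic-invariant and whose enclosed subsurface has fundamental subgroup generated, up to a choice of basis compatible with (\ref{eq:pi1}), by $\{x_1,y_1,x_2,y_2\}$. Since $S$ is hyperelliptic-invariant, it descends to a simple closed curve $\bar S$ on $Y = \mathbb{P}^1(\mathbb{C}) - \{0,1,2,\ldots,2g,\infty\}$ enclosing four of the branch points. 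By Lemma \ref{lem:nicepic}, the Dehn twist $\delta(\bar S)$ arises as $\phi(1)$ for some lift of a loop $\alpha$ based at $b$, and the canonical lift $\tilde\delta(S) = \tilde z([\alpha])$ realizes (up to a determined power) the Dehn twist about $S$ as an element of the image of $G$ in $\mathrm{Mod}(\mathcal{C}_b,\infty(b))$.

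Next, by Lemma \ref{lem:oyvey}, the action of $G$ on $\Omega$ coming from the split exact sequence (\ref{eq:conj}) is exactly the action induced by $\tilde z: G \to \mathrm{Mod}(\mathcal{C}_b,\infty(b))$. The subgroup $G(3,\ell)$ thus maps into the subgroup of the mapping class group acting trivially on $\Omega/\Omega_{3,\ell}$, and, because $\ell > 3$, the homomorphism $\tau_{3,\ell}$ of (\ref{eq:tauforhyper}) is the mod-$\ell$ reduction of Morita's second Johnson homomorphism
\[
\tau_2:\; \mathcal{M}_{g,*}(3) \longrightarrow \mathrm{Hom}\bigl(\Omega/\Omega_2,\,\Omega_3/\Omega_4\bigr)
\]
on the third Johnson subgroup of $\mathrm{Mod}(\mathcal{C}_b,\infty(b))$. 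Since a separating Dehn twist lies in the Johnson kernel $\mathcal{M}_{g,*}(2)$, and its cube (or appropriate power) lies in $\mathcal{M}_{g,*}(3)$ after mod-$\ell$ reduction, I would invoke Morita's explicit formulas from \cite{Morita1989} expressing $\tau_2$ of such a separating twist in terms of triple commutators of a symplectic basis associated to the enclosed subsurface. For our choice of $S$, the resulting homomorphism sends $x_i, y_i$ with $i \geq 3$ to the identity and sends $y_1, y_2$ to the indicated triple commutators in $x_1, x_2$, with the integer coefficient $\pm 8$ being dictated precisely by Morita's computation of the image of $\tau_2$ on a hyperelliptic bounding-pair/separating twist (the factor arising from the squaring forced by the hyperelliptic involution together with the normalization in Morita's formulas). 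Taking $\Phi$ to be an appropriate power of $[\alpha]$ modulo elements in $G(4,\ell)$ then yields $\tau_{3,\ell}(\Phi) = \lambda$.

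The main obstacle is the explicit matching of the factor $8$: one has to reconcile Morita's conventions for $\tau_2$ (via the trace contraction $H \otimes \Lambda^2 H \to H$) with the presentation (\ref{eq:pi1}) of $\Omega$ and with the specific braid model arising from Mumford's construction, and to verify that the hyperelliptic involution symmetry forces exactly the coefficients $-8$ on $y_1$ and $+8$ on $y_2$. A secondary technical point is checking that the chosen braid $[\alpha]$ (or a suitable power) actually lies in $G(3,\ell)$ and not merely in $G(3)$; this is handled by Remark \ref{rem:U4} together with the fact, via Remark \ref{rem:Johnsonhomgeneral}(a), that $\tau_{3,\ell}$ factors through $G(3,\ell)/G(4,\ell)$, so replacing $\Phi$ by an $\ell$-adic adjustment in $G(4,\ell)$ does not change its image under $\tau_{3,\ell}$. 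Once the coefficient check is in place, the lemma follows.
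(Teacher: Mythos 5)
Your proposal has a genuine gap at its computational core. The homomorphism $\lambda$ corresponds, under Morita's identification \eqref{eq:moritatau}, to (the image under $w$ of) the element $8\cdot(x_1\wedge x_2)^{\otimes 2}$ of $\wedge^2H\otimes\wedge^2H$ --- note that the triple commutators appearing in \eqref{eq:moritawant!} mix the $a$-type generators $x_1$ and $x_2$ of two \emph{different} handles. By Morita's Proposition 1.1, the Dehn twist about a genus-$h$ bounding simple closed curve has $\tau_3$-image $w\bigl((\sum_{i=1}^h x_i\wedge y_i)^{\otimes 2}\bigr)$; for your genus-$2$ separating curve this is $w\bigl((x_1\wedge y_1 + x_2\wedge y_2)^{\otimes 2}\bigr)$, and taking powers of the twist only rescales this element by an integer. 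No power of a single separating twist ever produces $w\bigl((x_1\wedge x_2)^{\otimes 2}\bigr)$, so your candidate $\Phi$ computes to the wrong homomorphism, and the ``squaring forced by the hyperelliptic involution'' does not account for the coefficient $8$. (Also, a separating twist already lies in the Johnson kernel, i.e.\ acts trivially on $\Gamma/\Gamma_3$; no cube is needed to enter $G(3)$.)

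The missing idea is a conjugation-plus-linear-combination argument inside the subgroup $\tau_3(G(3)_\Gamma)$. The paper starts from the \emph{genus-one} bounding twist $T_p$ (the square of the lift of a braid twist about three branch points), giving $w\bigl((x_1\wedge y_1)^{\otimes 2}\bigr)\in\tau_3(G(3)_\Gamma)$. Since $G(3)_\Gamma$ is normal in $G$ and, by Mumford's Lemma 8.12, the image of $G$ in $\mathrm{Sp}(2g,\mathbb{Z})/(\pm I)$ is the full level-$2$ congruence subgroup, one may conjugate by symplectic elements realizing $y_1\mapsto y_1\pm 2x_2$ (the factor $2$ is forced by the level-$2$ constraint). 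Expanding
$(x_1\wedge(y_1+2x_2))^{\otimes 2}+(x_1\wedge(y_1-2x_2))^{\otimes 2}-2\,(x_1\wedge y_1)^{\otimes 2}=8\,(x_1\wedge x_2)^{\otimes 2}$
isolates exactly the element needed; this is where the coefficient $8=2\cdot 2^2$ comes from. Finally, the passage from $G(3)_\Gamma$ to $G(3,\ell)$ is immediate from the chain of projections $\Gamma/\Gamma_3\to\Omega/\Omega_3\to\Omega/\Omega_{3,\ell}$, which gives $G(3)_\Gamma\le G(3,\ell)$ directly --- no $\ell$-adic adjustment of $\Phi$ is required. Without the conjugation-and-combination step your argument cannot be repaired by a different choice of a single separating curve, since every such curve yields an element of the form $w\bigl((\sum_i u_i\wedge v_i)^{\otimes 2}\bigr)$ for a symplectic pair basis of the enclosed subsurface, never a pure $(x_1\wedge x_2)^{\otimes 2}$ term.
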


\begin{proof}
Let $D_{2g+1}$ be a closed disc $D^2$ with $2g+1$ marked points and boundary $\partial D^2$, where we identify the interior of $D^2$ with $\mathbb{C}$. The mapping class group $\mathrm{Mod}(D_{2g+1})$ is isomorphic to the braid group on $2g+1$ strands (see \cite[\S9.1]{FarbMargalit2012}).

In fact, given a simple closed curve $S$ in $\mathbb{P}^1(\mathbb{C})-\{0,1,2,\ldots,2g,\infty\}=\mathbb{C}-\{0,1,2,\ldots,2g\}$, we can view $S$ as a simple closed curve $S_D$ in $D_{2g+1}$ that stays away from the boundary $\partial D^2$. Taking the homeomorphism of $D^2$ that is the Dehn twist $T_{S_D}$ along $S_D$ and that fixes $\partial D^2$ pointwise, $T_{S_D}$ corresponds to the Dehn twist $\delta(S)\in\mathcal{G}$, and hence to the normalized pure braid described in Lemma \ref{lem:nicepic}(b).

Let $\mathcal{C}_b^0$ be the Riemann surface obtained from $\mathcal{C}_b$ by removing  the interior of a small disc that has $\infty(b)$ at its boundary and that does not enclose any ramification points of $\mathcal{C}_b$. Let $\tilde{T}_{S_D}$ be the canonical lift of the Dehn twist $T_{S_D}$ to $\mathcal{C}_b^0$ that fixes its boundary pointwise, as described in \cite[\S 9.4]{FarbMargalit2012}. Note that by shrinking $S$ homotopically, if necessary, we can assume, without loss of generality, that the canonical lift $\tilde{\delta}(S)$ of the Dehn twist $\delta(S)$ to $\mathcal{C}_b$, as described in Lemma \ref{lem:nicepic}(b), fixes all points, including the boundary, of the small disc whose interior we removed from $\mathcal{C}_b$ to obtain $\mathcal{C}_b^0$. In particular, we obtain that $\tilde{\delta}(S)$ defines a homeomorphism of $\mathcal{C}_b^0$ that fixes its boundary pointwise. This implies that $\tilde{\delta}(S)$ is homotopic to $\tilde{T}_{S_D}$ with respect to a homotopy in $\mathcal{C}_b^0$ that fixes its boundary pointwise. In particular, $\tilde{\delta}(S)$ acts on $\pi_1^{\mathrm{top}}(\mathcal{C}_b^0,\infty(b))$ in the same way as $\tilde{T}_{S_D}$ acts. 

Using the above discussion, it follows from \cite[p. 249]{FarbMargalit2012}, that the lifts  $\tilde{\delta}(S)$ of the Dehn twists $\delta(S)$ described above  generate the image of $G$ in the mapping class group $\mathrm{Mod}\left(\mathcal{C}_{b}^0\right)$. 

Define $\Gamma:=\pi_1^{\mathrm{top}}(\mathcal{C}_b^0,\infty(b))$, which is a free group on $2g$ generators, and write
$$\Gamma= \langle x_1,y_1,x_2,y_2,\ldots,x_g,y_g\rangle.$$

Let $G(3)_\Gamma$ be the subgroup of elements of $G$ acting trivially on $\Gamma/\Gamma_3$. Then we obtain a homomorphism
\begin{equation}
\label{eq:Moritarestriction}
\tau_3: G(3)_\Gamma\to \mathrm{Hom}(\Gamma/\Gamma_2,\Gamma_3/\Gamma_4),
\end{equation}
which we identify with the restriction to $G(3)_\Gamma$ of Johnson's homomorphism for $k=3$ as defined by Morita in \cite[\S1]{Morita1989}. 
Defining $H:=\Gamma/\Gamma_2$, we identify
$$\mathrm{Hom}(\Gamma/\Gamma_2,\Gamma_3/\Gamma_4) = \mathrm{Hom}(H,\Gamma_3/\Gamma_4) = (\Gamma_3/\Gamma_4)\otimes H^* = (\Gamma_3/\Gamma_4)\otimes H.$$
As discussed in \cite[\S1]{Morita1989}, we have a natural isomorphism
$$\Gamma_3/\Gamma_4 = \frac{\wedge^2H \otimes H}{\wedge^3H}$$
which means that we can identify
\begin{equation}
\label{eq:moritatau}
\mathrm{Hom}(\Gamma/\Gamma_2,\Gamma_3/\Gamma_4) =
\frac{\wedge^2H \otimes H}{\wedge^3H}\otimes H.
\end{equation}
Consider the projection
\begin{equation}
\label{eq:moritaproj}
\wedge^2H\otimes\wedge^2 H \subset \wedge^2H\otimes H\otimes H \xrightarrow{\;w\;}\displaystyle\frac{\wedge^2H \otimes H}{\wedge^3H}\otimes H,
\end{equation}
where on simple wedges, the inclusion $\wedge^2 H\subset H\otimes H$ 
is given by
\begin{equation}
\label{eq:wedgetotensor}
h_1\wedge h_2 \mapsto h_1\otimes h_2 - h_2\otimes h_1
\end{equation}
for all $h_1,h_2\in H$.

Let now $p$ be a bounding simple closed curve on $\mathrm{Int}(\mathcal{C}_b^0)$, as depicted in Figure \ref{fig:chickenwithholes}, such that the genus of the subsurface of $\mathcal{C}_b^0$ that $p$ bounds is $1$. As described in \cite[\S 9.4.2, p. 256]{FarbMargalit2012}, the Dehn twist $T_p$ along $p$ is the square of the lift $\tilde{T}_q$ to $\mathcal{C}_b^0$ of the Dehn twist $T_q$ on $D_{2g+1}$ when $q$ is a simple closed curve on $D_{2g+1}$ surrounding the 3 marked points corresponding to the subsurface of $\mathcal{C}_b^0$ of genus 1 that $p$ bounds, as depicted in Figure \ref{fig:chickenwithholes}. 
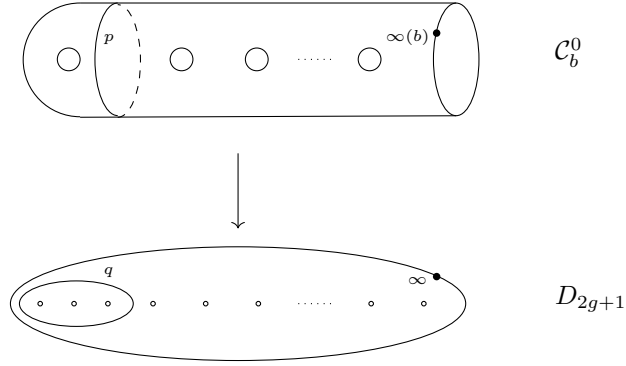
\begin{figure}[h]
\caption{The simple closed curve $p$ on $\mathcal{C}_b^0$ (resp. $q$ on $D_{2g+1}$).}
\label{fig:chickenwithholes}
\[\begin{tikzcd}
  \draw (0,0) arc [start angle=90, end angle=270, radius=5ex];
  \draw (0,0) -- (5,0);
  \draw (0,-1.52) -- (5,-1.5);
  \draw (5,-.75) ellipse (2ex and 4.95ex);
  \node at (4.74,-.4) [circle,fill,inner sep=1pt] {};
  \draw (3.75,-.5) node [label={0:{\mbox{\tiny $\infty(b)$}}}]{};  
  \draw (0,-.75) arc [start angle=0, end angle=360, radius=1ex];
  \draw (1.5,-.75) arc [start angle=0, end angle=360, radius=1ex];
  \draw (2.5,-.75) arc [start angle=0, end angle=360, radius=1ex];
  \draw (4,-.75) arc [start angle=0, end angle=360, radius=1ex];
  \draw [dotted] (2.9,-.75) -- (3.35,-.75);
  \draw (0.5,0) arc(90:270:2ex and 4.95ex);
  \draw (0,-.5) node [label={0:{\mbox{\tiny $p$}}}]{};  
  \draw [dashed] (0.5,-1.52) arc(-90:90:2ex and 4.95ex);
  \draw (6,-.75) node [label={0:\mathcal{C}_b^0}]{};  
  \draw [->] (2.1,-2) -- (2.1,-3);
  \draw (2.1,-4) ellipse (20ex and 5ex);
  \node at (4.74,-3.64) [circle,fill,inner sep=1pt] {};
  \draw (4.04,-3.74) node [label={0:{\mbox{\tiny $\infty$}}}]{};  
  \draw (-.5,-4) arc [start angle=0, end angle=360, radius=.2ex];
  \draw (-.05,-4) arc [start angle=0, end angle=360, radius=.2ex];
  \draw (.4,-4) arc [start angle=0, end angle=360, radius=.2ex];
  \draw (-.05,-4) ellipse (5ex and 2ex);
  \draw (0,-3.6) node [label={0:{\mbox{\tiny $q$}}}]{};  
  \draw (1,-4) arc [start angle=0, end angle=360, radius=.2ex];
  \draw (1.7,-4) arc [start angle=0, end angle=360, radius=.2ex];
  \draw (2.4,-4) arc [start angle=0, end angle=360, radius=.2ex];
  \draw (3.9,-4) arc [start angle=0, end angle=360, radius=.2ex];
  \draw (4.6,-4) arc [start angle=0, end angle=360, radius=.2ex];
  \draw [dotted] (2.9,-4) -- (3.35,-4);
  \draw (6,-4) node [label={0:D_{2g+1}}]{};  
\end{tikzcd}\]
\end{figure}

Let $S_q$ be a corresponding simple closed curve in $\mathbb{P}^1(\mathbb{C})-\{0,1,2,\ldots,2g,\infty\}$ surrounding the 3 branch points corresponding to these 3 marked points, such that the canonical lift $\tilde{\delta}(S_q)$ of the Dehn twist $\delta(S_q)$ defines a homeomorphism of $\mathcal{C}_b^0$ that fixes its boundary pointwise. 

By \cite[\S1]{Morita1989}, $T_p$ acts trivially on $\Gamma/\Gamma_3$, which means that the normalized pure braid associated with $\tilde{\delta}(S_q)$  lies in $G(3)_\Gamma$.
It follows from the proof of \cite[Prop. 1.1]{Morita1989} that $\tau_3(T_p)=\tau_3(\tilde{\delta}(S_q))$ equals the image of $( x_1\wedge y_1)^{\otimes 2}$ under the projection $w$ in (\ref{eq:moritaproj}) and (\ref{eq:wedgetotensor}). In other words,
\begin{equation}
\label{eq:Tp}
w\left(( x_1\wedge y_1)^{\otimes 2}\right)\in\tau_3(G(3)_\Gamma).
\end{equation}

By \cite[p. 3.131]{Mumford}, we have a homomorphism of discrete groups
\begin{equation}
\label{eq:mumford9}
\vartheta:\quad G=\pi_1^{\mathrm{top}}(\mathcal{H}_g^{(2)},b) \to \mathrm{Sp}(2g,\mathbb{Z})/(\pm I)
\end{equation}
given as follows. Let $\alpha:[0,1]\to \mathcal{H}_g^{(2)}$ be a loop based at $b = \alpha(0) = \alpha(1) = (2, 3, \ldots , 2g)$. As before, we lift $\alpha$ to $\phi:[0,1]\to \mathcal{G}$ such that $\pi(\phi(1)) = b$, i.e. $\phi(1)\in K_g$. Hence $\phi(1)$ is a homeomorphism of $\mathbb{P}^1$ carrying $(0,1,\ldots,2g,\infty)$ to itself. We lift $\phi(1)$ to a homeomorphism $\psi$ of the hyperelliptic curve $\mathcal{C}_b$. Then $\psi$ acts on the standard homology basis of $\mathcal{C}_b$ by a $2g\times 2g$ integral symplectic matrix, which we define to be $\vartheta([\alpha])$. By \cite[Lemma 8.12]{Mumford}, we have
\begin{equation}
\label{eq:mumford10}
\mathrm{image}(\vartheta)=\left\{\gamma\mod (\pm I)\;:\; \gamma \in \mathrm{Sp}(2g,\mathbb{Z})\mbox{ with }\gamma\equiv I_{2g}\mod 2\right\}.
\end{equation}
Let now $u=x_1$ and $v=y_1 \pm 2x_2$. Then $u\cdot v = 1$ with respect to the symplectic pairing on $H$ and thus there exists a symplectic automorphism $A$ of $H$ such that $A(x_1) = u$ and $A(y_1) = v$. Moreover, by (\ref{eq:mumford9}) and (\ref{eq:mumford10}), $A$ lies in the image of $G$ under $\vartheta$.
Since $G(3)_\Gamma$ is a normal subgroup of $G$, (\ref{eq:Tp}) implies that
\begin{equation}
\label{eq:ATp}
w\left((x_1\wedge (y_1 \pm 2x_2))^{\otimes 2}\right)=
w\left(( u\wedge v)^{\otimes 2}\right) \in \tau_3(G(3)_\Gamma).
\end{equation}
As in the proof of \cite[Prop. 1.2]{Morita1989}, we see that
\begin{eqnarray*}
( x_1\wedge (y_1 + 2x_2))^{\otimes 2} &=& ( x_1\wedge y_1)^{\otimes 2} + 4\cdot ( x_1\wedge x_2)^{\otimes 2} + 2 \cdot (x_1\wedge y_1 \leftrightarrow x_1\wedge x_2),\\
( x_1\wedge (y_1 - 2x_2))^{\otimes 2} &=& ( x_1\wedge y_1)^{\otimes 2} + 4\cdot ( x_1\wedge x_2)^{\otimes 2} - 2 \cdot (x_1\wedge y_1 \leftrightarrow x_1\wedge x_2),
\end{eqnarray*}
where
$$(x_1\wedge y_1 \leftrightarrow x_1\wedge x_2 ) = (x_1\wedge y_1) \otimes (x_1\wedge x_2) +  (x_1\wedge x_2) \otimes (x_1\wedge y_1).$$
By (\ref{eq:Tp}) and (\ref{eq:ATp}), this implies that 
$$w\left(8\cdot ( x_1\wedge x_2)^{\otimes 2}\right)\in\tau_3(G(3)_\Gamma).$$
Using (\ref{eq:moritaproj}) and (\ref{eq:wedgetotensor}), we see that
$$w\left(8\cdot(x_1\wedge x_2)^{\otimes 2}\right) = 8\cdot (x_1\wedge x_2) \otimes \left((x_1+\wedge^3H)\otimes x_2 - (x_2+\wedge^3H)\otimes x_1\right).$$
Using the identification (\ref{eq:moritatau}), this means that $w\left(8\cdot(x_1\wedge x_2)^{\otimes 2}\right)$ corresponds to the homomorphism 
$$\lambda_1: H=\Gamma/\Gamma_2 \to \Gamma_3/\Gamma_4$$ 
satisfying
\begin{equation}
\label{eq:moritaalmost}
\left\{\begin{array}{rccl}
\lambda_1(x_i) &\equiv& 1\mod \Gamma_4, & 1\le i\le g,\\
\lambda_1(y_1) &\equiv& [[x_1,x_2],x_2]^{-8} \mod\Gamma_4,\\
\lambda_1(y_2) &\equiv& [[x_1,x_2],x_1]^8 \mod\Gamma_4,\\
\lambda_1(y_i) &\equiv&1\mod \Gamma_4, & 3\le i\le g.
\end{array}\right.
\end{equation}

We can identify $\mathrm{Mod}\left(\mathcal{C}_{b}^0\right)$ with all automorphisms of $\Gamma$ that fix the element
$$c:=[x_1,y_1]\,[x_2,y_2]\,\cdots\,[x_g,y_g]\in \Gamma_2.$$
Therefore, all elements of $\mathrm{Mod}\left(\mathcal{C}_{b}^0\right)$ induce actual automorphisms of $\Omega=\Gamma/\langle c\rangle$. Restricting this action to the image of $G$ in $\mathrm{Mod}\left(\mathcal{C}_{b}^0\right)$, as described at the beginning of the proof, we obtain that this restricted action coincides with the action of $G=\pi_1^{\mathrm{top}}(\mathcal{H}_g^{(2)},b)$ on $\Omega=\pi_1^{\mathrm{top}}(\mathcal{C}_b,\infty(b))$ as in Lemma \ref{lem:oyvey}.

Since $\Omega=\Gamma/\langle c\rangle$ and $\Omega_3\le \Omega_{3,\ell}$, we have natural projections
$$\Gamma/\Gamma_3\to \Omega/\Omega_3\quad\mbox{and}\quad\Omega/\Omega_3\to \Omega/\Omega_{3,\ell}.$$
This implies
\begin{equation}
\label{eq:ohno!}
G(3)_\Gamma \le G(3)_\Omega\le G(3,\ell),
\end{equation}
when $G(3)_\Omega$ is the subgroup of $G$ consisting of elements acting trivially on $\Omega/\Omega_3$. 
Since $\lambda_1$ from (\ref{eq:moritaalmost}) lies in $\tau_3(G(3)_\Gamma)$, there exists $\Phi\in G(3)_\Gamma$ such that $\tau_3(\Phi)=\lambda_1$. Since $\Phi$ also lies in $G(3,\ell)$ by (\ref{eq:ohno!}), we conclude from (\ref{eq:moritaalmost}) that the homomorphism $\lambda:\Omega \to \Omega_{3,\ell}/\Omega_{4,\ell}$ from (\ref{eq:moritawant!}) equals $\tau_{3,\ell}(\Phi)$. 
\end{proof}

\begin{corollary}
\label{cor:nonvanishingtripleMassey}
Using the notation from Lemma $\ref{lem:lame1}$, and in particular $(\ref{eq:pi1})$, define three characters $\bar{\chi}_1,\bar{\chi}_2,\bar{\chi}_3\in\mathrm{Hom}(\Omega,\mathbb{Z}/\ell)$ by
$$\bar{\chi}_1=\bar{\chi}_3=x_1^*\quad\mbox{and} \quad\bar{\chi}_2=x_2^*.$$
Then $\bar{\chi}_1,\bar{\chi}_2,\bar{\chi}_3$ satisfy conditions $(i)$, $(ii)$ and $(iii)$ of Proposition $\ref{prop:sufficient_general}$ for $\omega_0=y_2$ and $\Phi\in G(3,\ell)$ as in Lemma $\ref{lem:lame1}$.
\end{corollary}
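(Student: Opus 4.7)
The plan is to verify each of the three conditions of Proposition \ref{prop:sufficient_general} in turn, using the choices $\bar\chi_1 = \bar\chi_3 = x_1^*$, $\bar\chi_2 = x_2^*$, $\omega_0 = y_2$, and $\Phi$ as supplied by Lemma \ref{lem:lame1}.

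Condition (ii) is immediate. Since $x_1,\ldots,y_g$ is a $\mathbb{Z}$-basis of the free abelian group $\Omega/\Omega_2$, the duals $x_1^*$ and $x_2^*$ are linearly independent over $\mathbb{Z}/\ell$; in particular all three characters are nonzero and the pair $\{\bar\chi_1,\bar\chi_2\}$ is linearly independent.

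For condition (i), I would first note that all cup products $\bar\chi_i\cup\bar\chi_j$ vanish in $\HH^2(\Omega,\mathbb{Z}/\ell)$ because the cup product on $\HH^1$ is the intersection form of the closed surface and $\{x_1,x_2\}$ spans an isotropic subspace; Remark \ref{rem:tripleMassey}(a) then gives non-emptiness of the Massey product. To see that zero lies in it, I would exhibit an explicit lift $\rho\colon\Omega\to U_4(\mathbb{Z}/\ell)$ by sending $x_1\mapsto M(1,0,1,0,0,0)$, $x_2\mapsto M(0,1,0,0,0,0)$, every other $x_j$ and every $y_j$ to the identity. The surface-group relation $\prod_i[x_i,y_i]=1$ is then vacuously satisfied because each $\rho(y_j)$ is trivial, while the $(1,2)$, $(2,3)$, $(3,4)$ entries of $\rho$ on the generators agree with $\bar\chi_1,\bar\chi_2,\bar\chi_3$ by construction. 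Remark \ref{rem:tripleMassey}(b) then guarantees $0\in\langle\bar\chi_1,\bar\chi_2,\bar\chi_3\rangle$.

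The substantive verification is condition (iii), and it is here that Lemma \ref{lem:lame1} does the heavy lifting. The vanishing $\bar\chi_i(y_2)=0$ for $i=1,2,3$ is immediate from the dual-basis structure, and (\ref{eq:moritawant!}) gives $\tau_{3,\ell}(\Phi)(y_2)\equiv [[x_1,x_2],x_1]^8 \bmod \Omega_{4,\ell}$. It therefore suffices to check that $h_\ell\bigl([[x_1,x_2],x_1]^8\bigr)\ne 0$ in $\mathbb{Z}/\ell$. By Remark \ref{rem:wretched}(a)-(b) this value equals $8\cdot c_{x_1,x_2,x_1}$, where $c_{\sigma,\tau,\gamma}$ is the $3\times 3$ determinant in (\ref{eq:triplecomm}) built from the values of the $\bar\chi_i$ on $\sigma,\tau,\gamma$. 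A direct expansion with $\sigma=x_1$, $\tau=x_2$, $\gamma=x_1$ yields $c_{x_1,x_2,x_1}=2$, so $h_\ell(\tau_{3,\ell}(\Phi)(y_2))=16$, which is a unit modulo any prime $\ell>3$ because $16$ is a power of $2$.

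The genuine obstacle in the whole line of reasoning has already been overcome by Lemma \ref{lem:lame1}, namely producing an element of $G(3,\ell)$ with prescribed image under Johnson's homomorphism that has a nontrivial component on the $[[x_1,x_2],x_1]$-direction. Granted that input, the present corollary reduces to the explicit $U_4$-lift constructed for (i), together with the short determinant computation in (iii); no further group-theoretic subtleties arise.
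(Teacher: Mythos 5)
Your proposal is correct and follows essentially the same route as the paper: the same explicit homomorphism $\rho_\Omega$ (with $x_1\mapsto M(1,0,1,0,0,0)$, $x_2\mapsto M(0,1,0,0,0,0)$, all other generators to the identity) for condition (i), the same immediate dual-basis observation for (ii), and the same computation $h_\ell(\tau_{3,\ell}(\Phi)(y_2))=8\cdot c_{x_1,x_2,x_1}=16\ne 0$ for (iii). The only cosmetic difference is your extra intersection-form remark justifying non-emptiness, which is subsumed by the explicit lift.
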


\begin{proof}
Define $\rho_\Omega:\Omega \to U_4(\mathbb{Z}/\ell)$ by
\begin{eqnarray*}
\rho_\Omega(x_1)&=&\left(\begin{array}{cccc}1&1&0&0\\0&1&0&0\\0&0&1&1\\0&0&0&1\end{array}\right),\\
\rho_\Omega(x_2) &=& \left(\begin{array}{cccc}1&0&0&0\\0&1&1&0\\0&0&1&0\\0&0&0&1\end{array}\right),\quad\mbox{and}\\
\rho_\Omega(z)&=&\left(\begin{array}{cccc}1&0&0&0\\0&1&0&0\\0&0&1&0\\0&0&0&1\end{array}\right)\quad
\mbox{for all $z\in\{x_3,\ldots,x_g,y_1,\ldots,y_g\}$.}
\end{eqnarray*}
Since commutators of group elements with the identity element are equal to the identity element, it follows that $\rho_\Omega$ is a group homomorphism. Because $\rho_\Omega$ is associated with $(\bar{\chi}_1,\bar{\chi}_2,\bar{\chi}_3)$ in the sense that the entries above the diagonal are given by $\bar{\chi}_1,\bar{\chi}_2,\bar{\chi}_3$, it follows by Remark \ref{rem:tripleMassey}(b) that the triple Massey product $\langle \bar{\chi}_1,\bar{\chi}_2,\bar{\chi}_3\rangle$ contains 0, which is condition (i) of Proposition \ref{prop:sufficient_general}.

None of the $\bar{\chi}_i$ are zero and both sets $\{\bar{\chi}_1,\bar{\chi}_2\}$ and $\{\bar{\chi}_2,\bar{\chi}_3\}$ are $(\mathbb{Z}/\ell)$-linearly independent. Hence we also have condition (ii) of Proposition \ref{prop:sufficient_general}.

For condition (iii), we first note that the definition of $\bar{\chi}_1,\bar{\chi}_2,\bar{\chi}_3$ implies that
$$\bar{\chi}_1(y_2)=\bar{\chi}_2(y_2)=\bar{\chi}_3(y_2)=0.$$
Suppose next that $\sigma,\tau,\gamma \in \{x_1,y_1,\ldots,x_g,y_g\}$ and $c_{\sigma,\tau,\gamma} \ne 0$ in (\ref{eq:triplecomm}).  Then $\gamma = x_1$, $\sigma , \tau \in \{x_1,x_2\}$ and $\sigma \ne \tau$ since otherwise the matrix on the right side of (\ref{eq:triplecomm}) either has a row that is zero or two equal rows. Therefore, the homomorphism $h_\ell:\Omega_{3,\ell}\to \mathbb{Z}/\ell$ from Remark \ref{rem:wretched}(b) satisfies
$$h_\ell([[\sigma,\tau],\gamma]) = \left\{\begin{array}{rl}
2 & \mbox{if $(\sigma,\tau,\gamma) = (x_1,x_2,x_1)$},\\
-2 & \mbox{if $(\sigma,\tau,\gamma) = (x_2,x_1,x_1)$},\\
0&\mbox{if $\sigma,\tau,\gamma \in \{x_1,y_1,\ldots,x_g,y_g\}$ and $(\sigma,\tau,\gamma) \not\in \{(x_1,x_2,x_1), (x_2,x_1,x_1)\}$}.
\end{array}\right.$$
Letting $\Phi$ and $\lambda$ be as in Lemma \ref{lem:lame1}, we obtain
$$h_\ell(\tau_{3,\ell}(\Phi)(y_2))=h_\ell(\lambda(y_2))=
h_\ell([[x_1,x_2],x_1]^8) = 8\cdot h_\ell([[x_1,x_2],x_1])=16\ne 0$$
since $\ell >3$. Therefore, we also have condition (iii) of Proposition \ref{prop:sufficient_general}, which concludes the proof of Corollary \ref{cor:nonvanishingtripleMassey}.
\end{proof}


\section{Algebraization and specialization}
\label{s:algebraization}

In this section we consider the fibration of hyperelliptic curves $\mathcal{C}\to\mathcal{H}_g^{(2)}$ defined in \S{}\ref{ss:hyperell}, and its section $\infty:\mathcal{H}_g^{(2)}\to \mathcal{C}$ sending a point $t\in \mathcal{H}_g^{(2)}$ to the unique point at infinity of the fiber $\mathcal{C}_t$.

For any fixed $t\in \mathcal{H}_g^{(2)}$, we have an exact sequence of topological fundamental groups attached to the fibration 
\begin{equation}
\label{eq:topologicalpi1}
1\to \pi_1^{\mathrm{top}}(\mathcal{C}_t,\infty(t)) \to \pi_1^{\mathrm{top}}(\mathcal{C},\infty(t)) \to \pi_1^{\mathrm{top}}(\mathcal{H}_g^{(2)},t) \to 1.
\end{equation}
The section $\infty$ gives a splitting $\infty_*$ of this exact sequence. In other words, 
$$
\pi_1^{\mathrm{top}}(\mathcal{C},\infty(t)) = \pi_1^{\mathrm{top}}(\mathcal{C}_t,\infty(t)) \rtimes_\infty \pi_1^{\mathrm{top}}(\mathcal{H}_g^{(2)},t),
$$
where the label $\infty$ emphasizes the fact that the action of any $\Phi\in \pi_1^{\mathrm{top}}(\mathcal{H}_g^{(2)},t)$ on $\pi_1^{\mathrm{top}}(\mathcal{C},\infty(t))$ is given by conjugation with $\infty_*(\Phi)$.
Let us observe that this semidirect product does not depend on the choice of the base point $t$ in the following sense.  If we choose another base point $t'$,  the isomorphisms induced on the  fundamental groups of $\mathcal{H}_g^{(2)}$ and $\mathcal{C}$ by the change of base point $t\mapsto t'$ and $\infty(t)\mapsto \infty(t')$ commute with the formation of the exact sequence \eqref{eq:topologicalpi1}.  
Therefore the resulting semidirect products are canonically isomorphic. In particular, choosing $t'=b$ as in the previous section, one obtains a canonical isomorphism
\begin{equation}
\label{eq:semidirectpi1top}
\pi_1^{\mathrm{top}}(\mathcal{C},\infty(t)) \cong \Omega \rtimes_\infty G
\end{equation}
where $G$ and $\Omega$ are as in \S{}\ref{ss:johnson}.
It is relevant to consider other base points than $b$ in view of the arithmetic constructions we will consider later.

\subsection{Algebraization}
\label{ss:alg}

We will make extensive use of the following algebraization result, which follows by combining \cite[exp. XII, Th{\'e}or{\`e}me 5.1]{SGA1} and \cite[exp. XIII, Proposition 4.6]{SGA1}.

\begin{theorem}
\label{thm:JeanAlgebra}
Let $X$ be a scheme of finite type over $\bar{\Q}$. Then there is an equivalence of categories between finite coverings of the topological space $X(\C)$ and finite {\'e}tale covers of $X$ defined over $\bar{\Q}$.
\end{theorem}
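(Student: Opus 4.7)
The plan is to factor the desired equivalence through the category of finite \'etale covers of the complex scheme $X_\C := X \otimes_{\bar\Q} \C$, paralleling the two consecutive expos\'es of SGA1 cited in the statement. Concretely, I would build the equivalence as the composition of an equivalence between finite coverings of $X(\C)$ and finite \'etale covers of $X_\C$, followed by an equivalence between finite \'etale covers of $X_\C$ and finite \'etale covers of $X$ over $\bar\Q$.

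For the first equivalence, I would invoke the Riemann existence theorem in the form of \cite[exp.\ XII, Th\'eor\`eme 5.1]{SGA1}, applied to the finite type $\C$-scheme $X_\C$. This result asserts that analytification $Y \mapsto Y^{\mathrm{an}}$ induces an equivalence between finite \'etale covers of $X_\C$ and finite topological coverings of the complex analytic space associated to $X_\C$, whose underlying topological space is exactly $X(\C)$.

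For the second equivalence, I would apply \cite[exp.\ XIII, Proposition 4.6]{SGA1}, which says that base change along $\bar{\Q} \hookrightarrow \C$ is an equivalence on categories of finite \'etale covers. The underlying mechanism is a spreading-out argument: any finite \'etale cover $Z \to X_\C$ descends to some finitely generated $\bar{\Q}$-subalgebra $A \subset \C$; since $\bar{\Q}$ is algebraically closed, $\Spec A$ is a nonempty $\bar{\Q}$-variety and admits a $\bar{\Q}$-point, at which one specializes to obtain a finite \'etale cover of $X$. Rigidity of finite \'etale covers under algebraically closed base changes then forces this specialization to be quasi-inverse to base change.

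Composing the two equivalences yields the theorem. The main obstacle is the second step: one must confirm essential surjectivity (every finite \'etale cover of $X_\C$ descends to $\bar{\Q}$) and that the descended cover is well-defined, independently of the choice of spreading-out algebra $A$ and $\bar{\Q}$-point of $\Spec A$. Both facts are encoded in \cite[exp.\ XIII, Proposition 4.6]{SGA1}, so once these two SGA1 inputs are in hand the proof reduces to transcribing the composition of equivalences and unwinding that $X(\C)$ agrees with the topological space underlying $X_\C^{\mathrm{an}}$.
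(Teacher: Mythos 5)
Your proposal is correct and is exactly the paper's approach: the paper offers no written proof, stating only that the theorem ``follows by combining'' \cite[exp. XII, Th\'eor\`eme 5.1]{SGA1} (the Riemann existence theorem for the finite type $\C$-scheme $X\otimes_{\bar{\Q}}\C$) with \cite[exp. XIII, Proposition 4.6]{SGA1} (invariance of the category of finite \'etale covers under the extension of algebraically closed fields $\bar{\Q}\hookrightarrow\C$). Your write-up simply makes explicit the composition of these two equivalences, which is precisely what the authors intend.
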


Since any scheme of finite type over $\bar{\Q}$ can be defined over some number field $K$, it follows that given a finite (topological) covering of $X(\C)$ there exists a number field $K$ and an {\'e}tale cover of $X$ defined over $K$ from which the topological covering is induced.

As one can see from their definition, the topological spaces $\mathcal{H}_g^{(2)}$ and $\mathcal{C}$ are the complex points of algebraic varieties $\mathcal{H}_{g,\Q}^{(2)}$ and $\mathcal{C}_{\Q}$  defined over $\Q$. More precisely, $\mathcal{H}_{g,\Q}^{(2)}$ is a Zariski open subset of the affine plane $\mathbb{A}^{2g-1}$ over $\Q$ (in particular it is a rational variety), and the map $\mathcal{C}_{\Q} \to \mathcal{H}_{g,\Q}^{(2)}$ is a fibration of smooth projective hyperelliptic curves of genus $g$.

We denote by $B(i,\ell) \to \mathcal{H}_g^{(2)}$ the finite topological coverings of $\mathcal{H}_g^{(2)}$ corresponding (under the Galois correspondence for covering spaces) to the finite index subgroups $G(i,\ell)$ of $G$ (see Definition \ref{def:lowercentral}(c) and Remark \ref{rem:lowercentral}(b)). 

On applying Theorem \ref{thm:JeanAlgebra}, we arrive at a finite \'etale cover of varieties 
$$
B(4,\ell)_K\to \mathcal{H}_{g,K}^{(2)}
$$
over a number field $K$ whose complex points are identified with the cover $B(4,\ell) \to \mathcal{H}_g^{(2)}$. One may require $K$ to be as small as possible but this is not needed in order to prove our specialization result. Since $B(3,\ell)$ and $B(2,\ell)$ are subcovers of $B(4,\ell)$, these are also defined over $K$.

\begin{remark}
\label{rem:legendrecurve}
Although $\mathcal{H}_g^{(2)}$ is defined over $\Q$, the case when $\mathcal{C}$ is a fibration of elliptic curves ($g=1$) already shows that the $B(i,\ell)$ do not descend to $\Q$; more precisely their field of definition $K$ contains $\Q(\zeta_\ell)$. In this case the cover $\mathcal{C} \to \mathcal{H}_1^{(2)} = \mathbb{P}^1-\{0,1,\infty\}$ has affine model the Legendre family $y^2 = x \cdot (x - 1) \cdot (x - T)$.  The cover $B(2,\ell)\to \mathcal{H}_1^{(2)}$ is a Galois cover with group $\SL_2(\Z/\ell)$, as was shown by Igusa \cite{Igusa1959}.  Over $\Q(\mathbb{P}^1)=\Q(T)$  the Galois representation is surjective onto $\GL_2(\Z/\ell)$, so the field of definition of the cover $B(2,\ell)$ is exactly $\Q(\zeta_\ell)$, via the cyclotomic character. 
\end{remark}

Suppose now that $X$ is any algebraic variety defined over $K$ and that we fix an embedding $\iota:K \hookrightarrow \mathbb{C}$.  Given a point $x \in X(K)$, we apply $\iota$ to arrive at a 
geometric point $\bar{x}:\mathrm{Spec}(\mathbb{C}) \to X$.  We will also use $\bar{x}$ to denote the point of the topological space $X(\mathbb{C})$ that results from $x$ and $\iota$.   We have a comparison morphism $\pi_1^{\mathrm{top}}(X(\mathbb{C}),\bar{x})\to \pi_1(X,\bar{x})$ between topological and {\'e}tale fundamental groups.  This comparison morphism factors through the profinite completion 
$\widehat{\pi_1^{\mathrm{top}}(X(\mathbb{C}),\bar{x})}$ and is functorial with respect to $X$, $x$ and 
$\iota$.

Now let $t$ be a $K$-rational point of $\mathcal{H}_{g,K}^{(2)}$.  This gives a geometric point 
$\bar{t}:\mathrm{Spec}(\mathbb{C}) \to  \mathcal{H}_{g,K}^{(2)}$. 
Since $\mathcal{C}_K\to \mathcal{H}_{g,K}^{(2)}$ has a section and the base field $K$ has characteristic zero, it follows from \cite[exp. XIII, Exemples 4.4]{SGA1} that we have an exact sequence of {\'e}tale fundamental groups
\begin{equation}
\begin{CD}
1 @>>> \pi_1(\mathcal{C}_{\bar{t}},\infty(\bar{t})) @>>> \pi_1(\mathcal{C}_K,\infty(\bar{t})) @>>> \pi_1(\mathcal{H}_{g,K}^{(2)},\bar{t}) @>>> 1 \\
\end{CD}
\end{equation}
and a splitting $\infty_*$ of this exact sequence. We write the corresponding semidirect product as
$$
\pi_1(\mathcal{C}_K,\infty(\bar{t})) = \pi_1(\mathcal{C}_{\bar{t}},\infty(\bar{t})) \rtimes_\infty \pi_1(\mathcal{H}_{g,K}^{(2)},\bar{t}).
$$
By the discussion above, we have a morphism
$$
\widehat{\pi_1^{\mathrm{top}}(\mathcal{C},\infty(\bar{t}))} \to \pi_1(\mathcal{C}_K,\infty(\bar{t}))
$$
which, by naturality, respects the structure of semidirect product corresponding to the section $\infty$ on both sides. We can decompose this morphism  as follows
\begin{equation}
\label{eq:comparison2}
\widehat{\pi_1^{\mathrm{top}}(\mathcal{C},\infty(\bar{t}))} \cong \widehat{\Omega} \rtimes_\infty \widehat{G} \cong  \pi_1(\mathcal{C}_{\bar{t}},\infty(\bar{t})) \rtimes_\infty \widehat{G} \to \pi_1(\mathcal{C}_{\bar{t}},\infty(\bar{t})) \rtimes_\infty \pi_1(\mathcal{H}_{g,K}^{(2)},\bar{t})
\end{equation}
where the first isomorphism comes from Remark \ref{rem:profcompl2}, the second isomorphism is by \cite[exp. XII, Corollaire 5.2]{SGA1}, and the last morphism is induced by the comparison morphism
$$
\widehat{G}=\widehat{\pi_1^{\mathrm{top}}(\mathcal{H}_g^{(2)},\bar{t})} \to \pi_1(\mathcal{H}_{g,K}^{(2)},\bar{t}).
$$

Let $\mathcal{H}_{g,\bar{K}}^{(2)}$ be $\bar{K} \otimes_K \mathcal{H}_{g,K}^{(2)}$.
The cover $B(4,\ell)_K\to \mathcal{H}_{g,K}^{(2)}$ is defined over $K$, and its complex points define the topological covering $B(4,\ell) \to \mathcal{H}_g^{(2)}$ with group $G/G(4,\ell)$.  Therefore the cover 
$B(4,\ell)_K\to \mathcal{H}_{g,K}^{(2)}$ has group $G/G(4,\ell) = \widehat{G}/\widehat{G}(4,\ell)$,
and its base change from $K$ to $\bar{K}$ is irreducible.  We conclude that the Galois covers 
$B(4,\ell)_K\to \mathcal{H}_{g,K}^{(2)}$ and $\mathcal{H}_{g,\bar{K}}^{(2)} \to \mathcal{H}_{g,K}^{(2)}$ are disjoint.  Therefore the subgroup of $\pi_1(\mathcal{H}_{g,K}^{(2)},\bar{t})$  that fixes
$B(4,\ell)_K$ surjects onto its quotient 
$\mathrm{Gal}(\bar{K}/K) = \pi_1(\mathcal{H}_{g,K}^{(2)},\bar{t})/\pi_1(\mathcal{H}_{g,\bar{K}}^{(2)},\bar{t})$.  
Hence, the morphism \eqref{eq:comparison2} induces an isomorphism
\begin{equation}
\label{eq:comparison3}
\overline{\Omega} \rtimes_\infty \overline{G} \cong \overline{\pi_1(\mathcal{C}_{\bar{t}},\infty(\bar{t}))} \rtimes_\infty \overline{\pi_1(\mathcal{H}_{g,K}^{(2)},\bar{t})}
\end{equation}
where the bars on the groups are the notation introduced in Remark \ref{rem:crucial}. Note that we have replaced $\widehat{G}/\widehat{G}(4,\ell)$ with $G/G(4,\ell)$ and similarly for $\Omega/\Omega_{4,\ell}$, which is made possible by Remark \ref{rem:profcompl2}.

\subsection{Specialization}
\label{ss:spe}

As previously, let $t$ be a $K$-rational point of  $\mathcal{H}_{g,K}^{(2)}$ with corresponding geometric point $\bar{t}$. Then we have a commutative diagram with exact rows
\begin{equation}
\label{eq:specialization}
\begin{CD}
1 @>>> \pi_1(\mathcal{C}_{\bar{t}},\infty(\bar{t})) @>>> \pi_1(\mathcal{C}_t,\infty(\bar{t})) @>>> \Gal(\bar{K}/K) @>>> 1 \\
@. @| @VVt_*V @VVt_*V \\
1 @>>> \pi_1(\mathcal{C}_{\bar{t}},\infty(\bar{t})) @>>> \pi_1(\mathcal{C},\infty(\bar{t})) @>>> \pi_1(\mathcal{H}_{g,K}^{(2)},\bar{t}) @>>> 1 \\
\end{CD}
\end{equation}
in which the vertical maps $t_*$ are functorially induced by the map $t:\Spec(K)\to \mathcal{H}_{g,K}^{(2)}$.

We consider these exact sequences as semidirect products, the splittings being induced by the section at infinity of $\mathcal{C}$ as usual.

We apply Hilbert's Irreducibility Theorem \cite[Chap. 3]{SerreTopics} to the cover $B(4,\ell)_K \to \mathcal{H}_{g,K}^{(2)}$. Since $\mathcal{H}_{g,K}^{(2)}$ is a rational variety, we obtain the existence of infinitely many $t\in \mathcal{H}_{g,K}^{(2)}(K)$ such that the fiber of $B(4,\ell)_K$ above $t$ remains irreducible.
This fiber is therefore the spectrum of a number field $K(4,\ell)(t)$ whose Galois group over $K$ is the same as the Galois group of $B(4,\ell)_K \to \mathcal{H}_{g,K}^{(2)}$, which is none other than $\overline{G}$ as we have seen in \S{}\ref{ss:alg}. By the same reasoning as in \S{}\ref{ss:alg} and in combination with \eqref{eq:comparison3}, we conclude that we have, for such $t$, an isomorphism
\begin{equation}
\label{eq:comparison4}
\overline{\Omega} \rtimes_\infty \overline{G} \cong \overline{\pi_1(\mathcal{C}_{\bar{t}},\infty(\bar{t}))} \rtimes_\infty \overline{\Gal(\bar{K}/K)}.
\end{equation}

We now prove our main result, which implies Theorem 
\ref{thm:TedsShamefulVersion} of the introduction.

\begin{theorem}
\label{thm:JeanVersion}
Let $\ell>3$ be a prime number, and let $g>1$ be an integer. Then there exist a number field $K$ containing $\Q(\zeta_\ell)$ and infinitely many $($non-isomorphic$)$ genus $g$ hyperelliptic curves $X$ defined over $K$, with the following property.  For each such $X$, there exists a finite extension $F/K$, over which the $\ell$-torsion $\mathrm{Pic}(\bar{X})[\ell]$ is defined, together with characters $\chi_1,\chi_2,\chi_3\in \HH^1(X\otimes_K F,\mathbb{Z}/\ell)$ whose triple Massey product is not empty and does not contain zero.
\end{theorem}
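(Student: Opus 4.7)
The plan is to assemble the pieces already developed in the paper: the group-theoretic characters from Corollary \ref{cor:nonvanishingtripleMassey}, the algebraization/specialization of the cover $B(4,\ell)_K\to\mathcal{H}_{g,K}^{(2)}$ in \S\ref{ss:alg}, Hilbert's irreducibility theorem of \S\ref{ss:spe}, and finally Lemma \ref{lem:cutdowngroup}(b) and Proposition \ref{prop:sufficient_general}.

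First I would fix, from Corollary \ref{cor:nonvanishingtripleMassey}, the three characters $\bar\chi_1=\bar\chi_3=x_1^*$, $\bar\chi_2=x_2^*\in\mathrm{Hom}(\Omega,\mathbb{Z}/\ell)$ together with $\omega_0=y_2$ and the element $\Phi\in G(3,\ell)\subset G=\pi_1^{\mathrm{top}}(\mathcal{H}_g^{(2)},b)$ produced by Lemma \ref{lem:lame1}. Since $\tau_{3,\ell}$ and $h_\ell$ both factor through $G(3,\ell)/G(4,\ell)$ and $\Omega_{3,\ell}/\Omega_{4,\ell}$ respectively, conditions (i)--(iii) of Proposition \ref{prop:sufficient_general} can be checked inside the finite group $\overline{\Omega}\rtimes_\infty\overline{G}$ of Remark \ref{rem:crucial}.

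Next, I would take the number field $K\supseteq\Q(\zeta_\ell)$ of \S\ref{ss:alg} (the minimal field of definition of $B(4,\ell)_K$, which already contains $\Q(\zeta_\ell)$ by the kind of reasoning in Remark \ref{rem:legendrecurve}), and apply Hilbert's irreducibility theorem to the $\overline{G}$-Galois cover $B(4,\ell)_K\to\mathcal{H}_{g,K}^{(2)}$. Because $\mathcal{H}_{g,K}^{(2)}$ is Zariski open in affine space, this yields infinitely many $t\in\mathcal{H}_{g,K}^{(2)}(K)$ for which the fiber is a single point $\mathrm{Spec}\,K(4,\ell)(t)$ with $\mathrm{Gal}(K(4,\ell)(t)/K)\cong\overline{G}$. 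Set $X=\mathcal{C}_t$ and let $F$ be the subfield of $K(4,\ell)(t)$ fixed by the normal subgroup $G(3,\ell)/G(4,\ell)$ of $\overline{G}$, so that $\mathrm{Gal}(K(4,\ell)(t)/F)=G(3,\ell)/G(4,\ell)$. The isomorphism (\ref{eq:comparison4}) then tells us that, modulo $\widehat{G}(4,\ell)$, the composite $\mathrm{Gal}(\bar F/F)\to\widehat{G}$ has image exactly $G(3,\ell)/G(4,\ell)$; in particular $\mathrm{Gal}(\bar F/F)$ acts trivially on $\pi_1(\bar X)/\pi_1(\bar X)_{3,\ell}$, hence a fortiori trivially on $\pi_1(\bar X)/\pi_1(\bar X)_{2,\ell}\cong\mathrm{Pic}(\bar X)[\ell]$, so (i) of Theorem \ref{thm:TedsShamefulVersion} holds.

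Viewing $\bar\chi_1,\bar\chi_2,\bar\chi_3$ as characters on $\pi_1(\bar X)$ via (\ref{eq:comparison4}), Lemma \ref{lem:cutdowngroup}(b) (applicable because $\mathrm{Gal}(\bar F/F)=\mathrm{Gal}(\bar F/F)(3,\ell)$) produces characters $\chi_1,\chi_2,\chi_3\in\HH^1(X\otimes_K F,\mathbb{Z}/\ell)$ extending the $\bar\chi_i$ whose triple Massey product is non-empty. To invoke Proposition \ref{prop:sufficient_general} we need condition (iii): by the surjection $\mathrm{Gal}(\bar F/F)\twoheadrightarrow G(3,\ell)/G(4,\ell)$ there exists a lift $\Phi_F\in\mathrm{Gal}(\bar F/F)$ of the class of $\Phi$ in $\overline{G}(3,\ell)$, and because $\tau_{3,\ell}$ factors through this quotient we have $h_\ell(\tau_{3,\ell}(\Phi_F)(\omega_0))=h_\ell(\lambda(y_2))\ne 0$ by Corollary \ref{cor:nonvanishingtripleMassey}. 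Proposition \ref{prop:sufficient_general} then gives $0\notin\langle\chi_1,\chi_2,\chi_3\rangle$. The fact that infinitely many $t$ produce infinitely many $K$-isomorphism classes of $X$ follows because the map from $\mathcal{H}_g^{(2)}(K)$ to $K$-isomorphism classes of hyperelliptic curves of genus $g$ is finite-to-one (each class has finitely many orderings of its $2g+1$ finite Weierstrass points up to the subgroup of $\mathrm{PGL}_2$ fixing $\{0,1,\infty\}$).

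The main obstacle is the compatibility issue in step three: one has to know that the specific topological element $\Phi\in G(3,\ell)$ produced through Morita's calculation with Johnson's homomorphisms in Lemma \ref{lem:lame1} actually corresponds, under the comparison maps, to an element of the arithmetic Galois group $\mathrm{Gal}(\bar F/F)$ with the same action on the relevant finite quotient. This is handled precisely by the chain of isomorphisms (\ref{eq:comparison4}), which is why so much care was taken in \S\ref{ss:alg}--\S\ref{ss:spe} to upgrade the topological exact sequence (\ref{eq:topologicalpi1}), via profinite completion and base-change compatibility of the section $\infty$, into an isomorphism of finite quotients compatible with the Galois-theoretic specialization diagram (\ref{eq:specialization}).
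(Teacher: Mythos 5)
Your proposal is correct and follows essentially the same route as the paper's proof: Corollary \ref{cor:nonvanishingtripleMassey} plus Remark \ref{rem:crucial} to reduce to the finite quotient $\overline{\Omega}\rtimes_\infty\overline{G}$, Hilbert irreducibility applied to $B(4,\ell)_K$ to transport this data to $\pi_1(\mathcal{C}_{\bar t})\rtimes_\infty\Gal(\bar K/K)$ via \eqref{eq:comparison4}, base change to the specialization $F$ of $B(3,\ell)_K$ so that Lemma \ref{lem:cutdowngroup}(b) and Proposition \ref{prop:sufficient_general} apply, and a finite-to-one counting argument for the non-isomorphy of the curves. The only differences are expository (you make the lift $\Phi_F$ and the role of \eqref{eq:comparison4} slightly more explicit than the paper does).
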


\begin{proof}
Consider the semidirect product $\Omega\rtimes_\infty G$ from \eqref{eq:semidirectpi1top}. According to Corollary \ref{cor:nonvanishingtripleMassey}, there exist characters of $\Omega$ satisfying conditions  (i), (ii) and (iii) of Proposition \ref{prop:sufficient_general}. By Remark \ref{rem:crucial}, it is equivalent to consider the corresponding data for the quotient groups $\overline{\Omega}$ and $\overline{G}$. As previously, let $K$ be the number field over which $B(4,\ell)$ is defined (this field $K$ contains $\Q(\zeta_\ell)$ by Remark \ref{rem:legendrecurve}). By \eqref{eq:comparison4}, there exist infinitely many values of $t\in \mathcal{H}_{g,K}^{(2)}(K)$ such that the curve $\mathcal{C}_t$ satisfies
$$
\overline{\Omega} \rtimes_\infty \overline{G} \cong \overline{\pi_1(\mathcal{C}_{\bar{t}},\infty(\bar{t}))} \rtimes_\infty \overline{\Gal(\bar{K}/K)}.
$$

Under this isomorphism, the characters of $\Omega$ are mapped to characters of $\pi_1(\mathcal{C}_{\bar{t}},\infty(\bar{t}))$ satisfying conditions (i), (ii) and (iii) of Proposition \ref{prop:sufficient_general}. In order to ensure that these characters of $\pi_1(\mathcal{C}_{\bar{t}},\infty(\bar{t}))$ extend to characters of $\pi_1(\mathcal{C}_t,\infty(\bar{t}))$ whose triple Massey product is not empty, we base change the curve $\mathcal{C}_t$ to the field $F$ fixed by $\Gal(\bar{K}/K)(3,\ell)$ (equivalently, $F$ is the field obtained by specializing the cover $B(3,\ell)_K\to \mathcal{H}_{g,K}^{(2)}$ above $t$). The result then follows from the conclusion of Proposition \ref{prop:sufficient_general}. Note that the $\ell$-torsion $\mathrm{Pic}(\mathcal{C}_{\bar{t}})[\ell]$ is defined over $F$ since $\Gal(\bar{K}/K)(2,\ell)$ contains $\Gal(\bar{K}/K)(3,\ell)$.

The fact that there are infinitely many non-isomorphic curves in our family is because for a given $t\in \mathcal{H}_{g,K}^{(2)}(K)$ there are exactly $(2g-1)!$ values of $t'\in \mathcal{H}_{g,K}^{(2)}(K)$ such that $\mathcal{C}_t\cong \mathcal{C}_{t'}$ as hyperelliptic curves (this corresponds to the action of the symmetric group on $\mathcal{H}_{g,K}^{(2)}$). Hence an infinite family of $t$ gives infinitely many non-isomorphic curves.
\end{proof}

\begin{remark}
\label{rem:Chebotarev}
In order to get examples over finite fields, we apply Chebotarev to the Galois extension $K(4,\ell)(t)/F$. At the same time, one keeps control on the \'etale fundamental group of the curve over the algebraic closure of the finite field provided the characteristic $p$ is prime to $\ell$, since the ``prime-to-$p$'' part of the \'etale fundamental group remains the same as in the characteristic $0$ case.  This means that conditions (i) and (ii) of Proposition \ref{prop:sufficient_general} hold for smooth specializations of $X\otimes_K F$ at finite places of $F$ that are prime to $\ell$ and that are unramified in $K(4,\ell)(t)$.  We know that condition (iii) of Proposition \ref{prop:sufficient_general} for $X\otimes_K F$ can
be satisfied by some pair of elements $(\Phi,\omega_0)$ as in the statement of the Proposition.  We now keep the same $\omega_0$ and use a Chebotarev density theorem to say $\Phi$ can be realized as  the Frobenius automorphism in $\mathrm{Gal}(K(4,\ell)(t)/F)$ associated to a place of $K(4,\ell)(t)$ of residue characteristic prime to $\ell$.
\end{remark} 


\end{document}